\newtheorem{theorem}{Theorem}[section]
\newtheorem{lemma}[theorem]{Lemma}
\theoremstyle{definition}
\newtheorem{example}[theorem]{Example}
\theoremstyle{remark}
\newtheorem{remark}[theorem]{Remark}
\newcommand{\defn}[1]{{\em #1}}
\def\0{{\bm 0}}   
\title{Hadamard matrices related to the projective planes}
\author{
 Hadi Kharaghani\thanks{Department of Mathematics and Computer Science, University of Lethbridge,
Lethbridge, Alberta, T1K 3M4, Canada. \texttt{kharaghani@uleth.ca}}
\and
  Sho Suda\thanks{Department of Mathematics,  National Defense Academy of Japan, Yokosuka, Kanagawa 239-8686, Japan. \texttt{ssuda@nda.ac.jp}}
}
\date{\today}
\begin{document}
\maketitle

\begin{abstract}
Let $n$ be the order of a (quaternary) Hadamard matrix. It is shown that the existence of a projective plane of order $n$ is equivalent to the existence of a balancedly multi-splittable (quaternary) Hadamard matrix of order $n^2$.
\end{abstract}

\section{Introduction}
K. A. Bush \cite{bush} was the first to establish a link between projective planes of even order and specific Hadamard matrices (now called Bush-type) in 1971. H. J. Ryser \cite{ryser} found the same connection as an application of factors of design matrix in 1977. Eric Verheiden \cite{Eric} showed that the existence of only four MOLS of size ten
would lead to a symmetric Bush-type Hadamard matrix of order 100 in 1981. \par Franc C. Bussemaker, Willem Haemers and Ted Spence \cite{Willem-ted} used an exhaustive search and found no strongly regular graph with parameters (36,15,6,6) and chromatic number six or equivalently a symmetric Bush-type Hadamard matrix of order 36. Many Bush-type Hadamard matrices of order 100 are constructed, but none is known to be symmetric. The proof of the nonexistence of a symmetric Bush-type Hadamard matrix of order 100 would be exciting, however, there has been no attempt at showing it so far. The nonexistence of the projective plane of order ten was finally established by a long computational method by C. W. H. Lam et al. in \cite{p10-3,p10}. 
\par Balancedly splittable Hadamard matrices were introduced by the authors in 2018 in \cite{hadi-sho}, and it is widely expanded in a recent paper by Jedwab et al. in \cite{jedwab}. It is known \cite{bsod} that the existence of a Hadamard matrix of order $4n$ would lead to a balancedly splittable Hadamard matrix of order $64n^2$. There is no balancedly splittable Hadamard matrix of order $4n^2$, $n$ odd, see \cite{hadi-sho}. 
The case of Hadamard matrices of order $16n^2$ remains open, and no balancedly splittable Hadamard matrix of order $144$ is known. \par Concentrating on the order $144$, the authors were led to some exotic classes of balancedly splittable Hadamard matrices which we have dubbed \emph{balancedly multi-splittable Hadamard matrices}. There is a balancedly multi-splittable Hadamard matrix of order $4^m$ for every positive integer $m$, and it seems that these are probably the only Hadamard matrices with this property. \par It will be shown that the existence of a projective plane of order $4n$ is equivalent to the existence of a balancedly multi-splittable Hadamard matrix of order $16n^2$ 
{provided that $4n$ is the order of a Hadamard matrix}. \par A similar equivalence between the projective plane of order $2n$, $n$ odd, and balancedly multi-splittable quaternary Hadamard matrices will be presented too. \par The connection between projective planes and Hadamard matrices shown in \cite{bush,ryser,Eric} are all one sided results in which from a projective plane of even order symmetric Bush-type Hadamard matrices are constructed. The proof of the nonexistence of a symmetric Bush-type Hadamard matrix of order 100 would be exciting; however, there has yet to be an attempt at showing it. 
The fact that less than half of the assumed nine MOLS are sufficient for their construction as shown in \cite{Eric} makes one wonder if there are no symmetric Bush-type Hadamard matrices of order 100, even though not a single one is found yet.


\section{Preliminaries}
\subsection{Hadamard matrices}

An $n\times n$ matrix $H$ is a \defn{Hadamard matrix of order $n$} if its entries are $1,-1$ and it satisfies $HH^\top=I_n$, where $I_n$ denotes the identity matrix of order $n$.  
A Hadamard matrix $H$ of order $n$ is said to be \defn{balancedly splittable} if there is an $\ell\times n$ submatrix $H_1$ of $H$ such that inner products for any two distinct column vectors of $H_1$ take at most two values. 
More precisely, there exist integers $a,b$ and the adjacency matrix $A$ of a graph such that $H_1^\top H_1=\ell I_n +a A+b(J_n-A-I_n)$, where $J_n$ denotes the all-ones matrix of order $n$. 
In this case we say that $H$ is balancedly splittable with respect $H_1$. Only the special case of $b=-a$ will be used in this note.

The same concept can be extended to orthogonal designs \cite{bsod}.  
Here, we adopt the following definition for quaternary Hadamard matrices.  
An $n\times n$ matrix $H$ is a \defn{quaternary Hadamard matrix of order $n$} if its entries are $\pm1,\pm i$ and it satisfies $HH^*=nI_n$.  
A quaternary Hadamard matrix $H$ of order $n$ is said to be \defn{balancedly splittable} if there is an $\ell\times n$ submatrix $H_1$ of $H$ such that
 the off-diagonal entries of $H_1^* H_1$ are in the set 
$$\{\varepsilon \alpha,\varepsilon \alpha^*,\varepsilon \beta,\varepsilon \beta^* \mid \varepsilon\in\{\pm1,\pm i\} \},$$ 
where $\alpha,\beta$ are some complex numbers.   
In this paper, we restrict to the case $\alpha=\beta$ and we say that a quaternary Hadamard matrix $H$ of order $n$ is balancedly splittable if $H_1^* H_1=\ell I+\alpha S$ where $\alpha$ is some positive real number and $S$ is a $(0,\pm1,\pm i)$-matrix with zero diagonal entries and nonzero off-diagonal entries.

\subsection{Orthogonal arrays}


An \emph{orthogonal array} of strength $t$ and index $\lambda$ is an $N \times k$ matrix over the set $\{1,\dots,q\}$ such that in every $N \times t$ subarray, each $t$-tuple in $S^t$ appears $\lambda$ times. We denote this property as OA$_\lambda(N,k,q,t)$. Note that $N=\lambda q^t$ and $(N,k,q,t)$ is the parameter of the orthogonal array.

For $t=2e$, the following lower bound on $N$ was shown by Rao (see \cite[Theorem~2.1]{KPS}), namely, $N\geq \sum_{i=0}^e \binom{k}{i}(q-1)^i$. An orthogonal array with parameters $(N,k,q,2e)$ is said to be complete if the equality holds in above. 

When $t=2$ and $\lambda=1$, the complete orthogonal array has the parameters OA$_1(q^2,q+1,q,2)$, and it is known that its existence is equivalent to that of a projective plane of order $q$. For the construction the orthogonal version of a projective plane is used in the next section.

The following lemmas will be used later. 
\begin{lemma}\label{lem:dis}
Let $A$ be an $N\times k$ matrix over $\{1,\ldots,q\}$. Write $A=\sum_{i=1}^{q} i A_i$, where $A_i$ ($i\in\{1,\ldots,q\}$) are disjoint $q^2 \times (q+1)$ $(0,1)$-matrices. 
Let $D$ be the distance matrix, ie., $D$ is an $N\times N$ matrix whose rows and columns indexed by the rows of $A$ with $(i,j)$-entry defined by the Hamming distance between the $i$-th row and the $j$-th row of $A$. 
Then 
$\sum_{i=1}^q A_iA_i^\top=k J_N-D$ holds.   
\end{lemma}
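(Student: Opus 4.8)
The plan is to verify the identity entrywise, by computing the $(r,s)$-entry of $\sum_{i=1}^q A_iA_i^\top$ and comparing it with the $(r,s)$-entry of $kJ_N-D$. First I would unwind the combinatorial meaning of the summands: since $A=\sum_{i=1}^q iA_i$ with the $A_i$ disjoint $(0,1)$-matrices, the entry $(A_i)_{rc}$ equals $1$ exactly when $A_{rc}=i$, and for every position $(r,c)$ there is precisely one index $i$ with $(A_i)_{rc}=1$. (Equivalently, $\sum_{i=1}^q A_i$ is the all-ones matrix of the same shape as $A$.)

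The main computation is then short. Fix two rows $r$ and $s$ of $A$. Then
\[
\Bigl(\sum_{i=1}^q A_iA_i^\top\Bigr)_{rs}=\sum_{c=1}^k\sum_{i=1}^q (A_i)_{rc}(A_i)_{sc},
\]
and for each fixed column $c$ the inner sum over $i$ equals $1$ when $A_{rc}=A_{sc}$ — in that case both rows hit the same unique block $A_i$ in column $c$ — and equals $0$ otherwise. Hence the $(r,s)$-entry of $\sum_{i=1}^q A_iA_i^\top$ counts the number of columns on which rows $r$ and $s$ agree, which is $k$ minus the Hamming distance $D_{rs}$ between those rows. Since $(kJ_N-D)_{rs}=k-D_{rs}$ as well, the two matrices coincide in every entry, which is the claim.

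I do not expect a genuine obstacle here; the only point deserving a word of justification is that the disjointness of the $A_i$ forces $\sum_{i=1}^q (A_i)_{rc}(A_i)_{sc}$ to be exactly the indicator that the $r$-th and $s$-th entries in column $c$ are equal, and the diagonal case is automatically consistent because $D_{rr}=0$ and each row agrees with itself in all $k$ columns.
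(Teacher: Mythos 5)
Your proof is correct, and it is the standard direct entrywise computation; the paper itself gives no argument for this lemma but simply defers to the proof of Lemma~2.5(i) in \cite{KPS}, which proceeds in essentially the same way. Your observation that disjointness of the $A_i$ makes $\sum_{i}(A_i)_{rc}(A_i)_{sc}$ the indicator of agreement in column $c$ is exactly the key point, so there is nothing to add.
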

\begin{proof}
See the proof of \cite[Lemma~2.5 (i)]{KPS}. 
\end{proof}

\begin{lemma}\label{lem:oa}
Assume that there exists an orthogonal array $A$ with parameters $(q^2,q+1,q,2)$. 
Write $A=\sum_{i=1}^{q} i A_i$, where $A_i$ ($i\in\{1,\ldots,q\}$) are disjoint $q^2 \times (q+1)$ $(0,1)$-matrices. 
Then  the matrices $A_i$ satisfy
\begin{enumerate}
\item $\sum_{i=1}^{q} A_iA_i^\top= J_{q^2}+q I_{q^2}$,
\item $\sum_{i,j=1,i\neq j}^{q}A_iA_j^\top=q(J_{q^2}-I_{q^2})$.
\item Consider the code $C$ obtained from the rows of $A$. Let $\{i_1,\ldots,i_s\}$ be any $s$-element subset of $\{1,\ldots,q+1\}$. The code $C'$ obtained from $C$ by restricting the coordinates on the set $\{i_1,\ldots,i_s\}$ have the Hamming distances $s$ or $s-1$ between the codewords in $C'$.  
\end{enumerate}
\end{lemma}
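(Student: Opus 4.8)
The plan is to derive all three parts from a single structural fact about the rows of $A$: any two distinct rows of $A$ agree in exactly one of the $q+1$ coordinates, equivalently the distance matrix $D$ of Lemma~\ref{lem:dis} satisfies $D=q(J_{q^2}-I_{q^2})$. Granting this, part (i) is immediate from Lemma~\ref{lem:dis} with $k=q+1$ and $N=q^2$, since then
$\sum_{i=1}^q A_iA_i^\top=(q+1)J_{q^2}-D=(q+1)J_{q^2}-q(J_{q^2}-I_{q^2})=J_{q^2}+qI_{q^2}$.

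To prove the structural fact I would first note that no two rows of $A$ coincide: the strength-$2$, index-$1$ property forces every ordered pair of values to occur in any fixed pair of columns exactly once, so two equal rows would make the common pair of values occur at least twice. Next fix a row $r$ of $A$. For each column $\ell\in\{1,\dots,q+1\}$, projecting $A$ onto column $\ell$ alone is an orthogonal array of strength $1$ and index $q$, so exactly $q$ rows (including $r$) carry the same value as $r$ in column $\ell$, i.e. exactly $q-1$ rows other than $r$ agree with $r$ there. Summing over the $q+1$ columns, the total number of column-agreements between $r$ and the remaining $q^2-1$ rows is $(q+1)(q-1)=q^2-1$. On the other hand, each row distinct from $r$ agrees with $r$ in at most one column, since two columns already determine a row uniquely by the index-$1$ property. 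As there are exactly $q^2-1$ other rows, each agrees with $r$ in exactly one column, so the Hamming distance from $r$ to every other row is $q$. This gives $D=q(J_{q^2}-I_{q^2})$ and hence part (i).

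For part (ii), observe that $\sum_{i=1}^q A_i=J_{q^2\times(q+1)}$ because each entry of $A$ takes exactly one value in $\{1,\dots,q\}$; therefore $\bigl(\sum_{i=1}^q A_i\bigr)\bigl(\sum_{j=1}^q A_j\bigr)^\top=(q+1)J_{q^2}$, and subtracting off the diagonal part $\sum_{i=1}^q A_iA_i^\top=J_{q^2}+qI_{q^2}$ obtained in (i) yields $\sum_{i\neq j}A_iA_j^\top=(q+1)J_{q^2}-(J_{q^2}+qI_{q^2})=q(J_{q^2}-I_{q^2})$. For part (iii), take two distinct codewords of $C$; by the structural fact they agree in exactly one coordinate, say $c$. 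Restricting to $\{i_1,\dots,i_s\}$: if $c\in\{i_1,\dots,i_s\}$ the restricted words agree precisely in coordinate $c$ and differ in the other $s-1$, so their distance is $s-1$; if $c\notin\{i_1,\dots,i_s\}$ they differ in all $s$ restricted coordinates, so their distance is $s$. Hence every pairwise distance in $C'$ is $s$ or $s-1$. The only step with real content is the double-counting argument establishing that distinct rows agree in \emph{exactly} one coordinate; once that is in hand, (i) is a one-line application of Lemma~\ref{lem:dis}, (ii) uses only the identity $\sum_i A_i=J$, and (iii) is a two-case split on the location of the agreement coordinate.
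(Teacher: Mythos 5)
Your proposal is correct and follows essentially the same route as the paper: the paper delegates (i) and (ii) to \cite[Lemma~2.5]{KPS} and derives (iii) from the fact that $C$ is a $1$-distance set with Hamming distance $q$, and your double-counting argument is exactly a self-contained proof of that equidistance fact, after which (i) via Lemma~\ref{lem:dis}, (ii) via $\sum_i A_i=J$, and the two-case split in (iii) all proceed as the paper intends. No gaps.
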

\begin{proof} 
The proof for (i) and (ii) are exactly the same as \cite[Lemma~2.5]{KPS}.   

The assumed orthogonal array is a $2$-design and $1$-distance set with Hamming distance $q$ in the Hamming association scheme. 
The case (iii) follows from the fact that $C$ is a $1$-distance set with Hamming distance $q$. 
\end{proof}

\begin{lemma}\label{lem:tight}{\rm \cite[Theorem~5.14]{D}}
Let $C$ be an equi-distance code of length $q+1$ over the symbol set $\{1,\ldots,q\}$. Then 
$$
|C|\leq q^2
$$
holds. Equality holds if and only if the matrix whose rows  consists of the codewords of $C$ is an orthogonal array OA$_{1}(q^2,q+1,q,2)$.  
\end{lemma}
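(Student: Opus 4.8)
The plan is to package the equi-distance hypothesis into a single matrix identity, read off the bound by a rank/tight-frame argument after quotienting out the all-ones direction, and then extract the equality case by upgrading a linear-independence statement into an orthogonality statement.

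For the bound, assume $|C|\ge 2$ (otherwise everything is trivial) and let $d$ be the common Hamming distance. For a coordinate $j\in\{1,\dots,q+1\}$ and a symbol $a$, let $v_{j,a}\in\{0,1\}^{C}$ indicate the codewords having $a$ in coordinate $j$, and set $m_{j,a}=\mathbf 1^{\top}v_{j,a}$; then $\sum_{a}v_{j,a}=\mathbf 1$ for each $j$, and counting, for every ordered pair of codewords, the coordinates in which they agree gives $\sum_{j}\sum_{a}v_{j,a}v_{j,a}^{\top}=(q+1-d)J+dI$ (matrices of order $|C|$). I would then project onto $W:=\mathbf 1^{\perp}$: if $\bar v_{j,a}$ denotes the image of $v_{j,a}$, the $J$-term disappears and $\sum_{j,a}\bar v_{j,a}\bar v_{j,a}^{\top}=d\,P_{W}$; since $d>0$, this makes $\{\bar v_{j,a}\}$ a tight frame for $W$, hence a spanning set. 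For fixed $j$ the $r_{j}\le q$ nonzero vectors among $\{\bar v_{j,a}\}_{a}$ obey the single relation $\sum_{a}\bar v_{j,a}=0$, so they span at most an $(r_{j}-1)$-dimensional space, and summing over $j$ yields $|C|-1=\dim W\le\sum_{j}(r_{j}-1)\le(q+1)(q-1)=q^{2}-1$, i.e. $|C|\le q^{2}$.

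For equality, suppose $|C|=q^{2}$, so every inequality above is tight: $r_{j}=q$ for all $j$ (every symbol occurs in every coordinate), and $W=\bigoplus_{j}U_{j}$ with $U_{j}:=\langle\bar v_{j,a}:a\rangle$ of dimension $q-1$. Put $T_{j}=\sum_{a}\bar v_{j,a}\bar v_{j,a}^{\top}\succeq 0$, so $\sum_{j}T_{j}=d\,P_{W}$, $T_{j}$ has rank $q-1$ with image $U_{j}$, and the ranks add up to $\dim W$. A short positive-semidefinite argument (each $T_{j}\preceq d\,P_{W}$, so its eigenvalues are at most $d$; comparing $\sum_{j}\operatorname{tr}T_{j}=d\dim W$ with $\sum_{j}\operatorname{tr}T_{j}\le d\sum_{j}\operatorname{rank}T_{j}=d\dim W$ forces each $T_{j}=d\,\Pi_{U_{j}}$, whence $\sum_{j}\Pi_{U_{j}}=P_{W}$ and then $\Pi_{U_{j}}\Pi_{U_{j'}}=0$ for $j\ne j'$) shows the subspaces $U_{j}$ are pairwise orthogonal. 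Since $\langle\bar v_{j,a},\bar v_{j',b}\rangle=\langle v_{j,a},v_{j',b}\rangle-m_{j,a}m_{j',b}/q^{2}$, orthogonality of $U_{j}$ and $U_{j'}$ ($j\ne j'$) yields $\#\{x\in C:x_{j}=a,\ x_{j'}=b\}=m_{j,a}m_{j',b}/q^{2}$ for all $a,b$. The left-hand side is a nonnegative integer, every $m_{j',b}\ge 1$, and $\sum_{b}m_{j',b}=q^{2}$; from this I would deduce $m_{j,a}=q$ for all $j,a$ (if some $m_{j,a_{0}}\le q-1$ then $q^{2}/\gcd(m_{j,a_{0}},q^{2})>q$ divides every $m_{j',b}$, forcing $\sum_{b}m_{j',b}\ge q(q+1)>q^{2}$ — here $q\ge 2$ is used). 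Then $\#\{x:x_{j}=a,\ x_{j'}=b\}=1$ for all distinct $j,j'$ and all $a,b$, which says exactly that the $q^{2}\times(q+1)$ array of codewords is an $\mathrm{OA}_{1}(q^{2},q+1,q,2)$. The converse direction is immediate from Lemma~\ref{lem:oa}(i): for an $\mathrm{OA}_{1}(q^{2},q+1,q,2)$ one has $\sum_{i}A_{i}A_{i}^{\top}=J_{q^{2}}+qI_{q^{2}}$, so any two distinct rows agree in exactly one coordinate and the code is equi-distant (with $d=q$).

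I expect the main obstacle to be the equality analysis: the dimension count alone only delivers the direct-sum decomposition, and the substantive points are (i) that a decomposition $\sum_{j}T_{j}=d\,P_{W}$ of a scalar operator into positive semidefinite pieces whose ranks add up exactly must consist of pairwise orthogonal scalar multiples of projections, and (ii) the elementary divisibility step upgrading orthogonality of the coordinate-partitions to equality of all symbol-frequencies, where a little care is needed with small $q$. The bound itself should be routine once the projection onto $\mathbf 1^{\perp}$ is in place.
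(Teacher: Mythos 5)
Your proof is correct, but it is a genuinely different route from the paper's: the paper does not prove this lemma at all, it simply cites Delsarte's Theorem~5.14, where the bound for equidistant codes is obtained inside the machinery of the Hamming association scheme (annihilator polynomials and the linear-programming method). You instead give a self-contained linear-algebra argument: the identity $\sum_{j,a}v_{j,a}v_{j,a}^{\top}=(q+1-d)J+dI$, projection onto $\mathbf 1^{\perp}$ to get a tight frame, and the rank count $\dim W\le\sum_j\dim U_j\le (q+1)(q-1)$; the equality analysis via $\sum_j T_j=dP_W$ with additive ranks forcing $T_j=d\,\Pi_{U_j}$ and pairwise orthogonal $U_j$ is sound, and the divisibility step correctly pins down $m_{j,a}=q$ and hence the index-one strength-two property, while the converse follows from Lemma~\ref{lem:oa}(i) as you say. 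What your approach buys is independence from Delsarte's framework and a proof that works for an arbitrary common distance $d$ (with $d=q$ emerging automatically at equality); what the citation buys the paper is brevity and placement of the result in its standard context. Two cosmetic caveats: the step $\dim U_j\le r_j-1$ presupposes $r_j\ge1$ (if all codewords agreed in coordinate $j$ one would have $r_j=0$ and $\dim U_j=0$, and one should fall back on $\dim U_j\le q-1$ directly, which does not affect the bound), and the degenerate case $q=1$ should be dispatched separately; neither affects the substance.
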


\section{Balancedly multi-splittable Hadamard martrices}
We consider the following property on a Hadamard matrix. 
Let $H$ be a Hadamard matrix of order $4n^2$. 
Assume that $H$ is normalized so that the first column of $H$ is the all-ones vector. 
A Hadamard matrix $H$ is said to be \defn{balancedly multi-splittable} if there is a block form of $H=\begin{bmatrix}
{\bm 1} & H_1 & \cdots & H_{2n+1}  
\end{bmatrix}$ 
such that $H$ is balancedly spllitable with respect to a submatrix $\begin{bmatrix}
 H_{i_1} & \cdots & H_{i_n}  
\end{bmatrix}$ for any $n$-element subset $\{i_1,\ldots,i_n\}$ of $\{1,2,\ldots,2n+1\}$. 


The main results of this paper are as follows: 
\begin{theorem}\label{thm:main}
Let $n$ be a positive integer. The following are equivalent. 
\begin{enumerate}
\item There exists a balancedly multi-splittable Hadamard matrix of order $16n^2$. 
\item There exist an OA$_1(16n^2,4n+1,4n,2)$ and a Hadamard matrix of order $4n$.  
\end{enumerate}
\end{theorem}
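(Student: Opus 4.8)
\emph{Plan.} I would prove both implications by exhibiting, in each direction, an explicit dictionary between the $4n+1$ column blocks of $H$ and the $4n+1$ coordinates of the orthogonal array, in which the Hadamard matrix of order $4n$ appears as the $4n\times(4n-1)$ array of distinct rows occurring inside any one block. For $(\mathrm{ii})\Rightarrow(\mathrm{i})$: normalize the Hadamard matrix $K$ of order $4n$ so that its first column is the all-ones vector and write $K=[\bm{1}\mid K']$, so $K'^{\top}K'=4nI_{4n-1}$, $\bm{1}^{\top}K'=\bm{0}$ and $K'K'^{\top}=4nI_{4n}-J_{4n}$. Writing the array as $A=\sum_{i=1}^{4n}iA_i$ as in Lemma~\ref{lem:oa}, let $N_c$ ($c=1,\dots,4n+1$) be the $16n^2\times 4n$ incidence matrix of the partition of the rows of $A$ determined by coordinate $c$ (its $s$-th column is the $c$-th column of $A_s$); then $N_c^{\top}N_c=4nI_{4n}$, $N_c\bm{1}=\bm{1}$, $\bm{1}^{\top}N_c=4n\,\bm{1}^{\top}$ and $N_c^{\top}N_{c'}=J_{4n}$ for $c\neq c'$. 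Put $H_c:=N_cK'$ and $H:=[\bm{1}\mid H_1\mid\cdots\mid H_{4n+1}]$; a direct computation with these identities gives $H^{\top}H=16n^2I$, so $H$ is a Hadamard matrix of order $16n^2$, normalized with first column $\bm{1}$ and with each block having $4n-1$ columns. For a $2n$-subset $S\subseteq\{1,\dots,4n+1\}$ one has $\sum_{c\in S}H_cH_c^{\top}=4n\sum_{c\in S}N_cN_c^{\top}-2nJ$; since any two distinct rows of $A$ agree in exactly one of the $4n+1$ coordinates (cf.\ the proof of Lemma~\ref{lem:oa}), $\sum_{c\in S}N_cN_c^{\top}=2nI+G_S$ with $G_S$ the adjacency matrix of the graph ``agreeing in exactly one coordinate of $S$''. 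Hence $\sum_{c\in S}H_cH_c^{\top}=(8n^2-2n)I+2nG_S-2n(J-G_S-I)$, which is precisely the balanced-splittability relation for the submatrix $[H_{i_1}\mid\cdots\mid H_{i_{2n}}]$ with $\ell=2n(4n-1)$ and $a=-b=2n$; so $H$ is balancedly multi-splittable.

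For $(\mathrm{i})\Rightarrow(\mathrm{ii})$: let $H=[\bm{1}\mid H_1\mid\cdots\mid H_{4n+1}]$ be balancedly multi-splittable. Since the columns of $H$ are mutually orthogonal of squared norm $16n^2$, each block $H_c$ (of size $16n^2\times(4n-1)$) satisfies $H_c^{\top}H_c=16n^2I$, $H_c^{\top}H_{c'}=0$ for $c\neq c'$, and $\bm{1}^{\top}H_c=\bm{0}$. First I would pin down the splitting parameter: for a $2n$-subset $S$ and $M_S=[H_{i_1}\mid\cdots\mid H_{i_{2n}}]$ one has $M_S^{\top}M_S=16n^2I$, so $M_SM_S^{\top}$ has only the eigenvalues $16n^2$ and $0$; matching this with the splitting form $M_SM_S^{\top}=\ell I+a_S(2G_S-J+I)$ (with $\ell=2n(4n-1)$ read off the diagonal), using $\bm{1}^{\top}M_S=\bm{0}$ to force $G_S$ regular, and then a trace computation, pins down $a_S=2n$ for every $S$. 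Next, fix distinct rows $r,r'$ of $H$ and set $x_c:=(H_cH_c^{\top})_{r,r'}$, an integer with $|x_c|\le 4n-1$ and $\sum_{c=1}^{4n+1}x_c=-1$ (from $HH^{\top}=16n^2I$). Applying the splitting to two disjoint $2n$-subsets $S,S'$ of $\{1,\dots,4n+1\}\setminus\{e\}$ — so that $\sum_{c\in S}x_c,\sum_{c\in S'}x_c\in\{2n,-2n\}$ — gives $x_e=-1-\sum_{c\in S}x_c-\sum_{c\in S'}x_c\in\{-1-4n,-1,4n-1\}$, hence $x_e\in\{-1,4n-1\}$; as $e$ is arbitrary and $\sum_c x_c=-1$, exactly one $x_c$ equals $4n-1$ and the others equal $-1$. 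By Cauchy--Schwarz, $x_c=4n-1$ holds exactly when rows $r$ and $r'$ of $H_c$ are equal, so ``rows $r$ and $r'$ of $H_c$ agree'' defines an equivalence relation $\mathcal{P}_c$ on the rows of $H$; any two rows lie in a common class of exactly one $\mathcal{P}_c$, and $H_cH_c^{\top}=4nN_cN_c^{\top}-J$, where $N_c$ is the incidence matrix of $\mathcal{P}_c$.

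The step I expect to be the main obstacle is showing that each $\mathcal{P}_c$ has exactly $4n$ classes, each of size $4n$. Writing $H_c=N_cW_c$ with $W_c$ the $m_c\times(4n-1)$ matrix of the distinct rows of $H_c$, the off-diagonal values just found give $W_cW_c^{\top}=4nI_{m_c}-J_{m_c}$, and comparing $\operatorname{rank}(4nN_cN_c^{\top}-J)$ with $\operatorname{rank}(H_cH_c^{\top})=\operatorname{rank}H_c=4n-1$ leaves only $m_c\in\{4n-1,4n\}$. If $m_c\le 4n-1$ then $W_cW_c^{\top}$ is invertible, so the rows of $W_c$ are linearly independent, which contradicts $\bm{1}^{\top}H_c=\sum_j g_{c,j}\,(\text{row }j\text{ of }W_c)=\bm{0}$ with the class sizes $g_{c,j}>0$; hence $m_c=4n$, and then the left kernel of $W_c$ is spanned by $\bm{1}_{4n}$, so the same relation forces all $g_{c,j}=4n$. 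Relabelling the $4n$ classes of each $\mathcal{P}_c$ by $\{1,\dots,4n\}$, the rows of $H$ become the codewords of an equidistance code of length $4n+1$ over $\{1,\dots,4n\}$ with $16n^2=(4n)^2$ codewords, whence Lemma~\ref{lem:tight} makes it an $\mathrm{OA}_1(16n^2,4n+1,4n,2)$; and $[\bm{1}\mid W_c]$ is a Hadamard matrix of order $4n$ because $[\bm{1}\mid W_c][\bm{1}\mid W_c]^{\top}=J+W_cW_c^{\top}=4nI_{4n}$. This gives $(\mathrm{ii})$, completing the plan.
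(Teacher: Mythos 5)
Your proposal is correct and takes essentially the same route as the paper: the same construction of the blocks (your $H_c=N_cK'$ is exactly the paper's $D=\sum_i A_i\otimes r_i$ read columnwise), the same determination that each $H_cH_c^\top$ is a $(4n-1,-1)$-matrix by summing over two disjoint $2n$-subsets, and the same endgame via the equidistant-code bound of Lemma~\ref{lem:tight}. The differences are presentational, except that you explicitly justify two points the paper asserts without proof, namely that the splitting value must equal $2n$ and that each block has exactly $4n$ distinct rows, each occurring $4n$ times.
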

\begin{theorem}\label{thm:mainq}
Let $n$ be a positive integer. The following are equivalent. 
\begin{enumerate}
\item There exists a balancedly multi-splittable quaternary Hadamard matrix of order $4n^2$
\item There exist an OA$_1(4n^2,2n+1,2n,2)$ and a quaternary Hadamard matrix of order $2n$. 
\end{enumerate}
\end{theorem}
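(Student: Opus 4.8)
The plan is to adapt the argument used for Theorem \ref{thm:main} to the quaternary setting, where the factor $4$ is replaced by $2$ throughout and the real Hadamard matrix of order $4n$ becomes a quaternary Hadamard matrix of order $2n$. For the direction (ii)$\Rightarrow$(i), I would start from an orthogonal array $A$ with parameters $(4n^2, 2n+1, 2n, 2)$, written as $A=\sum_{i=1}^{2n} i A_i$ with disjoint $(0,1)$-matrices $A_i$ of size $4n^2\times(2n+1)$, and a quaternary Hadamard matrix $K$ of order $2n$, normalized so that its first row is the all-ones vector. The idea is to ``inflate'' each column block: delete the first column of $A$ (or rather, treat the $2n+1$ coordinates of the code), and for the $t$-th coordinate replace the symbol $i\in\{1,\dots,2n\}$ appearing in row $r$ by the $i$-th row of $K$, producing a block $H_t$ of width $2n$. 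Concretely $H_t = \sum_{i=1}^{2n} (A_i)_{\cdot,t}\, k_i$ where $k_i$ is the $i$-th row of $K$ and $(A_i)_{\cdot,t}$ is the $t$-th column of $A_i$, so $H_t$ is a $4n^2\times 2n$ matrix over $\{\pm1,\pm i\}$. Prepending the all-ones column $\bm 1$ of length $4n^2$ and concatenating $H_1,\dots,H_{2n+1}$ gives a candidate $H$ of order $4n^2$; one checks it is a quaternary Hadamard matrix using that $K$ is quaternary Hadamard (so $k_i k_j^*$ sums correctly) together with the $2$-design property of $A$ encoded in Lemma \ref{lem:oa}(i),(ii), which controls $\sum_i A_iA_i^*$ and $\sum_{i\neq j}A_iA_j^*$.

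The balanced multi-splittability is where Lemma \ref{lem:oa}(iii) does the work. For any $n$-subset $\{i_1,\dots,i_n\}\subseteq\{1,\dots,2n+1\}$, consider the submatrix $\begin{bmatrix} H_{i_1} & \cdots & H_{i_n}\end{bmatrix}$. Its Gram matrix is $\sum_{t\in\{i_1,\dots,i_n\}} H_t^* H_t$; the off-diagonal $(r,r')$-entry records, for each coordinate $t$ in the chosen $n$-subset, the value $k_{a}^* k_{b}$ where $a,b$ are the symbols in rows $r,r'$ at coordinate $t$ — this is $2n$ if $a=b$ (agreement) and a single $(\pm1,\pm i)$-scalar-times-nothing, i.e. it contributes $0$ after the appropriate normalization, or rather contributes an entry of $S$, when $a\ne b$. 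By Lemma \ref{lem:oa}(iii) the restricted code $C'$ on these $n$ coordinates has only two Hamming distances, $n$ and $n-1$, so each pair of rows agrees in either $0$ or $1$ of the $n$ chosen coordinates. Hence the off-diagonal entry of the Gram matrix is either $0$ (distance $n$) or a single scalar $k_a^*k_b$ with $a\ne b$ from the one agreeing-index... — more precisely it equals one term $k_a^* k_b$, which has modulus $\sqrt{2n}\cdot$(something); the key point is that it is a $(0,\pm1,\pm i)$-valued expression of the required shape $\ell I + \alpha S$ once we verify that $k_a^*k_b$ for distinct rows lies in $\{\pm1,\pm i\}\cdot\alpha$. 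Establishing this last normalization — that all the relevant inner products $k_a k_b^*$, $a\ne b$, have the same modulus and differ only by a fourth root of unity — is the technical heart and the main obstacle; it should follow from a careful choice/normalization of $K$ (e.g. taking $K$ with constant first row forces each $k_a k_b^*$ to be a row sum of a quaternary Hadamard matrix of the relevant type), but the bookkeeping with the four values $\pm1,\pm i$ is delicate.

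For the converse (i)$\Rightarrow$(ii), I would reverse this. Given a balancedly multi-splittable quaternary Hadamard matrix $H=\begin{bmatrix}\bm 1 & H_1 & \cdots & H_{2n+1}\end{bmatrix}$ of order $4n^2$, each block $H_t$ has width $2n$ and, by looking at singleton-type information and the two-distance structure, one recovers from the rows of $H_t$ a partition of the $4n^2$ rows into $2n$ classes; reading off which class each row falls into at each of the $2n+1$ blocks yields a $4n^2\times(2n+1)$ array over $\{1,\dots,2n\}$. The balanced-splittability condition applied to all $n$-subsets, via Lemma \ref{lem:tight} (the equidistant-code tightness bound: a code of length $2n+1$ over $2n$ symbols with $4n^2$ words that is equidistant must be an OA$_1(4n^2,2n+1,2n,2)$) and Lemma \ref{lem:dis}, forces this array to be an orthogonal array with the stated parameters; and restricting $H$ to a single block $H_t$ together with the Hadamard property of $H$ produces, after collapsing equal rows, a quaternary Hadamard matrix of order $2n$. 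The main obstacle here is showing the recovered array is genuinely equidistant (all pairwise Hamming distances equal to $2n$, not just two-valued on every $n$-subset) so that Lemma \ref{lem:tight} applies; this requires summing the two-valued constraints over a suitable family of $n$-subsets and using a counting identity of the type in Lemma \ref{lem:dis} to pin down the global distance, exactly as in the real case of Theorem \ref{thm:main}.
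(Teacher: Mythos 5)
Your overall strategy matches the paper's (inflate an orthogonal array by the rows of a small quaternary Hadamard matrix; reverse via an equidistant code and the bound of Lemma~\ref{lem:tight}), but there are two concrete problems.

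First, in the direction constructing the multi-splittable matrix, your blocks have the wrong width. You replace each symbol by a \emph{full} row $k_i$ of $K$, so each $H_t$ is $4n^2\times 2n$ and the concatenation $\begin{bmatrix}\bm 1 & H_1&\cdots&H_{2n+1}\end{bmatrix}$ has $1+2n(2n+1)=4n^2+2n+1$ columns, not $4n^2$: the matrix is not square. The paper instead normalizes $K$ to have all-ones first \emph{column} and uses only the truncated rows $r_i$ of length $2n-1$, setting $D=\sum_i A_i\otimes r_i$ and prepending $\bm 1$, which gives exactly $ (2n+1)(2n-1)+1=4n^2$ columns. This is not merely bookkeeping: with that normalization $r_ir_j^*=-1$ exactly for $i\neq j$ and $r_ir_i^*=2n-1$, so an off-diagonal Gram entry over an $n$-subset of blocks equals $a(2n-1)-(n-a)=2na-n\in\{-n,n\}$ by Lemma~\ref{lem:oa}(iii). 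In particular the ``technical heart'' you flag --- that the cross inner products $k_ak_b^*$ might be different fourth roots of unity times a common modulus --- does not arise at all in the correct setup; it is an artifact of using the untruncated rows.

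Second, the converse direction is missing its key step. The paper's Lemma~\ref{lem:9-1} shows that each $H_iH_i^*$ is a $(2n-1,-1)$-matrix by writing $HH^*=4n^2I$ as $J+\sum_i H_iH_i^*$ and applying the splittability hypothesis to the two disjoint $n$-subsets $\{2,\ldots,n+1\}$ and $\{n+2,\ldots,2n+1\}$ of blocks, then ruling out entries of modulus exceeding $2n-1$. From this, each $\tilde H_i=\begin{bmatrix}\bm 1 & H_i\end{bmatrix}$ has rank $2n$ and its distinct rows form a quaternary Hadamard matrix of order $2n$, which both defines the symbol assignment and produces the required small Hadamard matrix. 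Equidistance of the resulting code is then immediate (not an obstacle requiring averaging over $n$-subsets as you suggest): row orthogonality gives $1+\sum_{i}r_{1,i}r_{2,i}^*=0$ with each term in $\{2n-1,-1\}$, forcing exactly one agreement and hence Hamming distance $2n$ for every pair of rows. Without the two-disjoint-subsets argument your sketch does not establish the two-valued inner products within a single block, and without the orthogonality identity it does not establish global equidistance.
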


\subsection{Proof of Theorem~\ref{thm:main}}
\begin{proof}[The proof of (i) $\Rightarrow$ (ii)]
Assume that there exists a Hadamard matrix  $H$ of order {$4n$}. 
Write $H$ as 
$$
H=\begin{bmatrix}
1 & r_1\\
1 & r_2\\
\vdots & \vdots\\
1 & r_{4n}\\
\end{bmatrix}, 
$$
where $r_i$ is a $1\times {(4n-1)}$ matrix for any $i$. 
\begin{lemma}\label{lem:hm}
\begin{enumerate}
\item For any $i$, $r_ir_i^\top={4n-1}$. 
\item For any distinct $i,j$, $r_i r_j^\top=-1$. 
\end{enumerate}

\end{lemma}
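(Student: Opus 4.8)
The plan is to obtain both identities simply by reading off the entries of $HH^\top$ and comparing with the Hadamard identity $HH^\top=4nI_{4n}$, after first arranging (by negating rows where necessary, which preserves the Hadamard property) that the first column of $H$ is the all-ones vector, exactly as in the displayed form of $H$ above.

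First I would record that the $i$-th row of $H$ is the concatenation $\begin{bmatrix}1 & r_i\end{bmatrix}$, so that the $(i,j)$-entry of $HH^\top$ is the inner product of rows $i$ and $j$, namely $1+r_ir_j^\top$, the summand $1$ coming from the common leading coordinate; for $i=j$ this reads $1+r_ir_i^\top$. Next I would invoke $HH^\top=4nI_{4n}$: on the diagonal this forces $1+r_ir_i^\top=4n$, hence $r_ir_i^\top=4n-1$, which is (i); off the diagonal it forces $1+r_ir_j^\top=0$ for $i\neq j$, hence $r_ir_j^\top=-1$, which is (ii).

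There is no genuine obstacle here, the computation being immediate. The only points worth keeping in mind are that $H$ is a real Hadamard matrix (so $H^*=H^\top$ and all the inner products involved are real integers), and that the normalization of the first column has in fact been performed; indeed (i) holds for every row of any $\{\pm1\}$-matrix of length $4n$ irrespective of normalization, whereas (ii) is precisely the place where the agreement of the first coordinates of all rows is used.
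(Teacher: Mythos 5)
Your proof is correct and is exactly the intended argument: the paper states Lemma~\ref{lem:hm} without proof, treating it as the immediate consequence of reading the diagonal and off-diagonal entries of $HH^\top=4nI_{4n}$ for the row-normalized Hadamard matrix, which is precisely your computation. Your closing remark correctly identifies where the normalization of the first column is actually used.
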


Assume that there exists an OA$(16n^2,4n+1,4n,2)$, say $A$, of index $1$ 
 over $\{1,\ldots,4n\}$. 
 Write $A=\sum_{i=1}^{4n} iA_i$, where the $A_i$'s are disjoint {$16n^2 \times (4n+1)$} $(0,1)$-matrices.
We then define the {$16n^2 \times (16n^2-1)$} matrix $D$ by $D=\sum_{i=1}^{4n} A_i\otimes r_i$ and $\tilde{D}=\begin{bmatrix}{\bm 1} &D \end{bmatrix}$.
\begin{lemma}
\begin{enumerate}
\item $DD^\top={16n^2 I_{16n^2}-J_{16n^2}}$. 
\item $\tilde{D}$ is a Hadamard matrix of order $16n^2$. 
\end{enumerate}
\end{lemma}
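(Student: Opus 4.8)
The plan is to expand $DD^\top$ via the mixed-product property of Kronecker products and then substitute the two identities for the $A_i$ from Lemma~\ref{lem:oa} together with the inner-product values for the $r_i$ from Lemma~\ref{lem:hm}.

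For part (i), I would first write
\[
DD^\top=\Bigl(\sum_{i=1}^{4n}A_i\otimes r_i\Bigr)\Bigl(\sum_{j=1}^{4n}A_j^\top\otimes r_j^\top\Bigr)=\sum_{i,j=1}^{4n}(A_iA_j^\top)\otimes(r_ir_j^\top).
\]
Since each $r_i$ is a row vector, $r_ir_j^\top$ is a scalar and the Kronecker product collapses to ordinary scalar multiplication, so $DD^\top=\sum_{i,j=1}^{4n}(r_ir_j^\top)\,A_iA_j^\top$. Separating the diagonal terms $i=j$ from the off-diagonal terms $i\neq j$ and using Lemma~\ref{lem:hm} gives
\[
DD^\top=(4n-1)\sum_{i=1}^{4n}A_iA_i^\top-\sum_{\substack{i,j=1\\ i\neq j}}^{4n}A_iA_j^\top .
\]
Now substituting $\sum_{i}A_iA_i^\top=J_{16n^2}+4nI_{16n^2}$ and $\sum_{i\neq j}A_iA_j^\top=4n(J_{16n^2}-I_{16n^2})$ from Lemma~\ref{lem:oa} (with $q=4n$), the $J_{16n^2}$-terms cancel up to a sign and the $I_{16n^2}$-terms combine, yielding $DD^\top=16n^2I_{16n^2}-J_{16n^2}$ after a one-line arithmetic simplification.

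For part (ii), I would first observe that $D$ has $16n^2$ rows and $(4n+1)(4n-1)=16n^2-1$ columns, so $\tilde D$ is square of order $16n^2$. Its entries lie in $\{\pm1\}$: in the row of $D$ indexed by a fixed row of the orthogonal array, the block of $4n-1$ coordinates coming from the $c$-th column of the array is precisely $r_i$ for the symbol $i$ occurring there (because the $A_i$ are disjoint $(0,1)$-matrices with $A=\sum_i iA_i$), and each $r_i$ is a $\pm1$ row since $H$ is a Hadamard matrix; the adjoined first column ${\bm 1}$ is also $\pm1$. Finally,
\[
\tilde D\tilde D^\top={\bm 1}{\bm 1}^\top+DD^\top=J_{16n^2}+\bigl(16n^2I_{16n^2}-J_{16n^2}\bigr)=16n^2I_{16n^2},
\]
so $\tilde D$ is a Hadamard matrix of order $16n^2$.

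I do not expect a genuine obstacle here: the mathematical content is entirely carried by the already-established Lemmas~\ref{lem:oa} and~\ref{lem:hm}, and this lemma merely assembles them. The only points needing care are keeping the transposes and the scalar factor $r_ir_j^\top$ straight in the Kronecker expansion, and checking the column count $(4n+1)(4n-1)=16n^2-1$ so that $\tilde D$ is genuinely square.
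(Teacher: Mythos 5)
Your proposal is correct and follows essentially the same route as the paper: expand $DD^\top$ by the mixed-product property, split the sum into $i=j$ and $i\neq j$ terms, and substitute Lemmas~\ref{lem:hm} and~\ref{lem:oa}. The only difference is that you spell out part (ii) (square shape, $\pm1$ entries, and $\tilde D\tilde D^\top=J_{16n^2}+DD^\top$) where the paper simply says it is immediate, which is a harmless elaboration.
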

\begin{proof}
(i): By Lemma~\ref{lem:oa} and Lemma~\ref{lem:hm},  
\begin{align*}
DD^\top&=\sum_{i,j=1}^{4n} A_i A_j^\top \otimes r_i r_j^\top\\\displaybreak[0]
&=\sum_{i=1}^{4n} A_i A_i^\top \otimes r_i r_i^\top+\sum_{i\neq j} A_i A_j^\top \otimes r_i r_j^\top\\\displaybreak[0]
&=({4n-1})\sum_{i=1}^{4n} A_i A_i^\top -\sum_{i\neq j} A_i A_j^\top \\\displaybreak[0]
&=({4n-1}) J_{16n^2}+({4n-1})\cdot 4n I_{16n^2}- 4n(J_{16n^2}-I_{16n^2})\\\displaybreak[0]
&=16n^2 I_{16n^2}-J_{16n^2}. 
\end{align*}
(ii) immediately follows from (i). 
\end{proof}

Let $A'$ be a submatrix of $A$ obtained by restricting the columns to a $2n$ element set. 
Write $A'=\sum_{i=1}^{4n} i A'_i$, where $A'_i$ ($i\in\{1,\ldots,4n\}$) are disjoint $16n^2 \times 2n$ $(0,1)$-matrices.
\begin{lemma}\label{lem:A'}
There exists a symmetric $(0,1)$-matrix $B$ with diagonal entries $0$ such that 
\begin{enumerate}
\item $\sum_{i=1}^{4n} {A'}_i{A'}_i^\top=2n J_{16n^2}-(2nB+({2n-1})(J_{16n^2}-I_{16n^2}-B))$, and
\item $\sum_{i,j=1,i\neq j}^{4n}{A'}_i{A'}_j^\top=2nB+({2n-1})(J_{16n^2}-I_{16n^2}-B)$.
\end{enumerate}
\end{lemma}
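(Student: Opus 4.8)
The plan is to deduce both displayed identities from a single count: the number of coordinate positions, among the $2n$ selected columns, on which two rows of $A$ agree. The key external input will be Lemma~\ref{lem:oa}(iii).

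First I would note the trivial identity $\sum_{i=1}^{4n}A'_i=J_{16n^2\times 2n}$, since each entry of $A'$ takes exactly one value in $\{1,\dots,4n\}$. Hence
\[
\sum_{i,j=1}^{4n}A'_i(A'_j)^\top=\Bigl(\sum_{i=1}^{4n}A'_i\Bigr)\Bigl(\sum_{j=1}^{4n}A'_j\Bigr)^\top=2n\,J_{16n^2},
\]
so (i) and (ii) are equivalent: once (i) is proved, (ii) follows by subtracting it from $2n\,J_{16n^2}$, and one checks that this difference is exactly $2nB+(2n-1)(J_{16n^2}-I_{16n^2}-B)$. So it suffices to establish (i).

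Next I would read off $\sum_{i=1}^{4n}A'_i(A'_i)^\top$ entrywise: its $(u,v)$-entry is the number of the $2n$ selected columns on which rows $u$ and $v$ of $A$ agree. This is $2n$ when $u=v$. When $u\neq v$, Lemma~\ref{lem:oa}(iii), applied with $q=4n$ and $s=2n$, tells us that the restriction of the row-code of $A$ to the $2n$ selected columns has all pairwise Hamming distances equal to $2n$ or $2n-1$; equivalently rows $u$ and $v$ agree on exactly $0$ or exactly $1$ of those columns. Define $B$ to be the $(0,1)$-matrix of order $16n^2$ whose $(u,v)$-entry is $1$ precisely when $u\neq v$ and rows $u$ and $v$ of $A$ agree on none of the $2n$ selected columns (note that $B$ depends on the chosen subset); it is symmetric with zero diagonal. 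The off-diagonal agreement-count matrix is then exactly $J_{16n^2}-I_{16n^2}-B$, so
\[
\sum_{i=1}^{4n}A'_i(A'_i)^\top=2n\,I_{16n^2}+\bigl(J_{16n^2}-I_{16n^2}-B\bigr)=(2n-1)I_{16n^2}+J_{16n^2}-B .
\]
A short expansion (writing $J_{16n^2}=I_{16n^2}+B+(J_{16n^2}-I_{16n^2}-B)$ and collecting coefficients) shows that the right-hand side of (i), namely $2n\,J_{16n^2}-\bigl(2nB+(2n-1)(J_{16n^2}-I_{16n^2}-B)\bigr)$, equals the same matrix $(2n-1)I_{16n^2}+J_{16n^2}-B$, which proves (i); (ii) then follows as explained above.

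This argument is essentially bookkeeping, so there is no serious obstacle. The only point needing attention is that no two distinct rows of $A$ collapse to the same word after restricting to $2n$ coordinates --- which is ensured because Lemma~\ref{lem:oa}(iii) gives restricted distances at least $2n-1\ge1$ --- together with choosing $B$ (rather than its complement) so that the stated closed form of (i) is reproduced exactly.
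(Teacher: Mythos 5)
Your proposal is correct and follows essentially the same route as the paper: both derive (i) from the fact that, by Lemma~\ref{lem:oa}(iii) with $s=2n$, the restricted row-code has distances $2n$ or $2n-1$ (you count agreements directly, whereas the paper cites Lemma~\ref{lem:dis}, which encodes the same entrywise count), and both obtain (ii) by subtracting (i) from $\sum_{i,j}A'_i(A'_j)^\top=2nJ_{16n^2}$ using $\sum_i A'_i=J_{16n^2,2n}$.
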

\begin{proof}
Since the distance matrix of the code of rows of $A'$ is $2nB+({2n-1})(J_{16n^2}-I_{16n^2}-B)$ with the desired property, the case (i) follows from Lemma~\ref{lem:dis}. 

Since $\sum_{i=1}^{4n} {A'}_i=J_{16n^2,2n}$, we have $\sum_{i,j=1}^{4n} {A'}_i {A'}_j^\top=(\sum_{i=1}^{4n} {A'}_i)(\sum_{j=1}^{4n} {A'}_j^\top)=J_{16n^2,2n}J_{2n,16n^2}=2n J_{16n^2}$. This with (i) shows  (ii). 
\end{proof}
Now we consider $D'=\sum_{i=1}^{4n} {A'}_i\otimes r_i$. 
Then, by Lemma~\ref{lem:A'}, 
\begin{align*}
D'{D'}^\top&=\sum_{i,j=1}^{4n} {A'}_i {A'}_j^\top \otimes r_i r_j^\top\\\displaybreak[0]
&=\sum_{i=1}^{4n} {A'}_i {A'}_i^\top \otimes r_i r_i^\top+\sum_{i\neq j} {A'}_i {A'}_j^\top \otimes r_i r_j^\top\\\displaybreak[0]
&=({4n-1})\sum_{i=1}^{4n} {A'}_i {A'}_i^\top -\sum_{i\neq j} {A'}_i {A'}_j^\top \\\displaybreak[0]
&=({4n-1}) (2n J_{16n^2}-(2nB+({2n-1})(J_{16n^2}-I_{16n^2}-B))- (2nB+({2n-1})(J_{16n^2}-I_{16n^2}-B))\\\displaybreak[0]
&= ({8n^2-2n})I_{16n^2}+2n (J_{16n^2}-I_{16n^2}-2B). 
\end{align*}
Therefore the Hadamard matrix $D$ is balancedly multi-splittable.  
\end{proof}

\begin{proof}[The proof of (ii) $\Rightarrow$ (i)]
Assume that $H$ is a balancedly multi-splittable Hadamard matrix of order $16n^2$ with respect to the following block form: 
$$
H=\begin{bmatrix}
{\bm 1} & H_1 & \cdots & H_{{4n+1}}  
\end{bmatrix},
$$ 
where each $H_i$ is a $16n^2\times {(4n-1)}$ matrix. 

\begin{lemma}\label{lem:11-1}
For any $i$, $H_i H_i^\top$ is a $({4n-1},-1)$-matrix.   
\end{lemma}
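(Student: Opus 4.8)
Here ``$(4n-1,-1)$-matrix'' must be read as ``matrix all of whose entries lie in $\{4n-1,-1\}$'': since $H_i$ has only $4n-1$ columns, $H_iH_i^\top$ has rank at most $4n-1<16n^2$, so it certainly is not $4nI-J$, and both values genuinely occur among the off-diagonal entries. The diagonal claim is free: each row of $H_i$ is a $\pm1$-vector of length $4n-1$, so every diagonal entry of $H_iH_i^\top$ equals $4n-1$. My plan for the off-diagonal entries is first to determine the splitting constant on every admissible $2n$-fold union of blocks, and then to run a short congruence argument at one fixed off-diagonal position.

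For the first step, fix a $2n$-element subset $S\subseteq\{1,\dots,4n+1\}$ and let $M_S$ be the $16n^2\times 2n(4n-1)$ matrix whose columns are the columns of $H$ occurring in the blocks indexed by $S$. On the one hand, the balancedly multi-splittable hypothesis (in the case $b=-a$ recorded in the preliminaries) says exactly that $M_SM_S^\top=\sum_{k\in S}H_kH_k^\top$ has all its off-diagonal entries equal to $+a_S$ or $-a_S$ for a single nonnegative integer $a_S$, while its diagonal entries are $2n(4n-1)$. On the other hand, since $H$ is a square Hadamard matrix of order $16n^2$ its columns are orthogonal of squared length $16n^2$, so $M_S^\top M_S=16n^2I$ and hence $M_SM_S^\top$ has only the eigenvalues $16n^2$ (with multiplicity $2n(4n-1)$) and $0$. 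I would then equate $\operatorname{tr}\bigl((M_SM_S^\top)^2\bigr)$ computed from the spectrum with the same trace computed entrywise; this gives $(16n^2-1)a_S^2=4n^2(16n^2-1)$, i.e.\ $a_S=2n$, for \emph{every} such $S$.

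For the second step, fix distinct row indices $p\ne q$ and put $c_k=(H_kH_k^\top)_{pq}$; each $c_k$ is an odd integer with $|c_k|\le 4n-1$. Reading off the $(p,q)$-entry of $HH^\top=J_{16n^2}+\sum_{k=1}^{4n+1}H_kH_k^\top=16n^2I$ gives $\sum_{k=1}^{4n+1}c_k=-1$, and the first step gives $\sum_{k\in S}c_k\in\{2n,-2n\}$ for every $2n$-subset $S$. Comparing two $2n$-subsets that differ in a single index (which is possible for every ordered pair $j\ne k$) then forces $c_j-c_k\in\{0,\pm4n\}$ for all $j\ne k$, so there is an odd integer $x$ with $\{c_1,\dots,c_{4n+1}\}\subseteq\{x,x+4n\}$. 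If $m$ of the indices have $c_k=x+4n$, then $(4n+1)x+4nm=-1$; the $c_k$ cannot all be equal (as $4n+1\nmid1$ for $n\ge1$), so $m\ge1$ and thus $x\le-1$, and reducing this equation modulo $4n$ gives $x\equiv-1\pmod{4n}$, whence $x=-1$. Therefore every off-diagonal entry of $H_iH_i^\top$ lies in $\{-1,4n-1\}$, which together with the diagonal computation proves the lemma.

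\medskip

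\noindent The only delicate point is the first step: the off-diagonal information coming from balanced splittability is by itself too weak, and one has to use that $H$ is a \emph{square} Hadamard matrix (equivalently, that $M_S^\top M_S$ is a scalar matrix) in order to pin the splitting constant to the single value $2n$ uniformly over all $2n$-subsets $S$. After that the remaining argument is elementary arithmetic. One should also keep in mind a mild transposition bookkeeping point: ``$H$ balancedly splittable with respect to the block-column submatrix $\begin{bmatrix}H_{i_1}&\cdots&H_{i_{2n}}\end{bmatrix}$'' refers to the Gram matrix of the \emph{rows} of that submatrix, i.e.\ to $\sum_{k\in S}H_kH_k^\top$.
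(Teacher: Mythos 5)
Your proof is correct, and it follows the same basic skeleton as the paper's (decompose $HH^\top=J_{16n^2}+\sum_{k=1}^{4n+1}H_kH_k^\top=16n^2I_{16n^2}$ and exploit the two-valued row inner products on $2n$-fold unions of blocks), but the execution differs in two substantive ways. First, the paper simply asserts that the splitting value on a $2n$-fold union is $\pm 2n$; your trace/eigenvalue computation from $M_S^\top M_S=16n^2I$ actually proves this, which fills a genuine gap rather than merely adding detail. Second, the paper's endgame is shorter and more rigid: it takes the single partition $\{2,\dots,2n+1\}\cup\{2n+2,\dots,4n+1\}$ of the complement of $\{1\}$, writes $H_1H_1^\top=4nI_{16n^2}-J_{16n^2}-2n(S+S')$ with $S,S'$ the two $(0,\pm1)$ splitting matrices, and rules out the entry $-4n-1$ by the trivial bound $|(H_1H_1^\top)_{pq}|\le 4n-1$, forcing $S+S'$ to be a $(0,-2)$-matrix. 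Your route instead compares all $2n$-subsets pairwise to get $c_j-c_k\in\{0,\pm 4n\}$ and finishes with the congruence $x\equiv-1\pmod{4n}$; this is a bit longer but yields as a by-product that exactly one $c_k$ equals $4n-1$ for each row pair, which the paper only extracts later (in the equidistance-code lemma). Both arguments are sound; yours is the more self-contained of the two.
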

\begin{proof}
We show the case $i=1$. 
Since $H$ is a Hadamard matrix of order $16n^2$, $HH^\top=16n^2I_{16n^2}$, that is, 
$$
J_{16n^2}+\sum_{i=1}^{{4n+1}}H_i H_i^\top=16n^2I_{16n^2}. 
$$ 
By the assumption of balanced multi-splittabllity, we have that the inner product of distinct rows of matrices $\begin{bmatrix} H_2 & \cdots & H_{{2n+1}}  
\end{bmatrix}$ or $\begin{bmatrix}
 H_{{2n+2}} & \cdots & H_{{4n+1}}  
\end{bmatrix}$ are $\pm 2n$. Thus, 
$$
\sum_{i=2}^{{2n+1}}H_i H_i^\top={(8n^2-2n)}I_{16n^2}+2nS,\quad \sum_{i={2n+2}}^{{4n+1}}H_i H_i^\top={(8n^2-2n)}I_{16n^2}+2nS', 
$$
where $S$ and $S'$ are $(0,1,-1)$-matrices with diagonal entries $0$ and off-diagonal entries $\pm1$. 
Then 
\begin{align*}
H_1H_1^\top&=16n^2I_{16n^2}-J_{16n^2}-({(16n^2-4n)}I_{16n^2}+2nS+2nS')\\
&=4nI_{16n^2}-J_{16n^2}-2n(S+S').
\end{align*}
Since both $S$ and $S'$ are $(0,\pm1)$-matrix, $S+S'$ is a $(0,\pm2)$-matrix with diagonal entries $0$.  However, the off-diagonal entries of $H_1H_1^\top$ cannot be ${-4n-1}$, $S+S'$ is $(0,-2)$-matrix. 
Therefore, $H_1H_1^\top$ is a $({4n-1},-1)$-matrix. 
\end{proof}

For each $i$,  consider the matrix $\tilde{H}_i=\begin{bmatrix}
{\bm 1} & H_i  \end{bmatrix}$. 
Then, by Lemma~\ref{lem:11-1}, $\tilde{H}_i\tilde{H}_i^\top$ is a $(4n,0)$-matrix. 
Since $\tilde{H}_i^\top \tilde{H}_i=16n^2I_{4n}$, the rank of $\tilde{H}_i$ is $4n$. 
Therefore there exist $4n$ rows of $\tilde{H}_i$ that correspond to the rows of
a Hadamard matrix $\tilde{K}_i$ of order $4n$.
 
Write $\tilde{K}_i=\begin{bmatrix}
{\bm 1} & K_i  \end{bmatrix}$.  
Assign a symbol $j$ to any row in $H_i$, which equals the $j$-th row of $K_i$. Let $A$ be the resulting $16n^2\times {(4n+1)}$ matrix over the symbol set $\{1,\ldots,4n\}$. 

\begin{lemma}
The code $C$ with codewords consisting of the rows of $A$ is an equidistance code with the number of codewords $16n^2$, equidistance $4n$, of length ${4n+1}$. 
\end{lemma}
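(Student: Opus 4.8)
The plan is to convert statements about coordinate agreements in the code $C$ into statements about the off-diagonal entries of the Gram matrices $H_kH_k^\top$, and then to read off everything from the single identity $\sum_{k=1}^{4n+1}H_kH_k^\top=16n^2I_{16n^2}-J_{16n^2}$, which is just $HH^\top=16n^2I_{16n^2}$ rewritten via $H=[{\bm 1}\ H_1\ \cdots\ H_{4n+1}]$.

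First I would record the dichotomy for the augmented blocks $\tilde H_k=[{\bm 1}\ H_k]$: by Lemma~\ref{lem:11-1} we have $(H_kH_k^\top)_{ab}\in\{4n-1,-1\}$ for $a\neq b$ and $(H_kH_k^\top)_{aa}=4n-1$, so $\tilde H_k\tilde H_k^\top=J_{16n^2}+H_kH_k^\top$ has all entries in $\{4n,0\}$; since $\pm1$ vectors of length $4n$ with inner product $4n$ are equal and those with inner product $0$ are orthogonal, any two rows of $\tilde H_k$, equivalently any two rows of $H_k$, are either identical or orthogonal. As the rows of the Hadamard matrix $K_k$ are pairwise distinct, the symbol attached to a row of $H_k$ is well defined, and for $a\neq b$ the $k$-th coordinates of the codewords $c_a,c_b$ are equal if and only if rows $a$ and $b$ of $H_k$ coincide, i.e.\ if and only if $(H_kH_k^\top)_{ab}=4n-1$ rather than $-1$.

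Next, distinctness of the codewords: if $c_a=c_b$ with $a\neq b$, then rows $a$ and $b$ of every $H_k$ agree, and since the first (deleted) column of $H$ is constant, rows $a$ and $b$ of $H$ itself would agree, contradicting $HH^\top=16n^2I_{16n^2}$; hence $C$ has $16n^2$ distinct codewords, each of length $4n+1$ over $\{1,\ldots,4n\}$. For the equidistance, fix $a\neq b$; the identity $\sum_{k=1}^{4n+1}H_kH_k^\top=16n^2I_{16n^2}-J_{16n^2}$ gives $\sum_{k=1}^{4n+1}(H_kH_k^\top)_{ab}=-1$. If $m$ of the $4n+1$ summands equal $4n-1$ and the remaining $4n+1-m$ equal $-1$, then $m(4n-1)+(4n+1-m)(-1)=-1$, which simplifies to $mn=n$, so $m=1$. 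Therefore $c_a$ and $c_b$ agree in exactly one coordinate and differ in the other $4n$, i.e.\ the Hamming distance between any two distinct codewords of $C$ equals $4n$.

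The argument is essentially bookkeeping; the only point requiring care is the translation in the second step between ``$c_a$ and $c_b$ agree in coordinate $k$'' and ``$(H_kH_k^\top)_{ab}=4n-1$'', which rests on the identical-or-orthogonal dichotomy furnished by Lemma~\ref{lem:11-1}. Once that dictionary is fixed, the global relation $\sum_k H_kH_k^\top=16n^2I_{16n^2}-J_{16n^2}$ forces both the distinctness of the $16n^2$ codewords and the equidistance $4n$; moreover the very same count will then let us invoke Lemma~\ref{lem:tight} to conclude that $A$ is an OA$_1(16n^2,4n+1,4n,2)$, which together with the Hadamard matrices $\tilde K_i$ of order $4n$ completes the implication (ii)$\Rightarrow$(i).
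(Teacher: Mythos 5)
Your proof is correct and follows essentially the same route as the paper: both reduce to the single orthogonality relation between two rows of $H$ (your $\sum_k (H_kH_k^\top)_{ab}=-1$ is the paper's $1+\sum_i r_{1,i}r_{2,i}^\top=0$), invoke Lemma~\ref{lem:11-1} to restrict each blockwise inner product to $\{4n-1,-1\}$, and count to find exactly one block contributing $4n-1$, hence distance $4n$. Your added care about the well-definedness of the symbol assignment (rows of each $H_k$ being identical or orthogonal) and the distinctness of codewords is a useful elaboration of points the paper leaves implicit, but not a different argument.
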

\begin{proof}
It is enough to see the case for the first row and second row. 
Let the first and second rows of $H$ be the following forms:
\begin{align*}
\begin{bmatrix}
1 & r_{1,1} & \cdots & r_{1,{4n+1}}  
\end{bmatrix}, 
\\
\begin{bmatrix}
1 & r_{2,1} & \cdots & r_{2,{4n+1}}  
\end{bmatrix}. 
\end{align*}
Consider the inner product between them: 
$$
1+\sum_{i=1}^{{4n+1}}r_{1,i}r_{2,i}^\top=0.
$$
By Lemma~\ref{lem:11-1}, $r_{1,i}r_{2,i}^\top\in\{{4n-1},-1\}$ for any $i$. Then there exists $i_0$ such that 
$r_{1,i_0}r_{2,i_0}^\top={4n-1}$ and $r_{1,i}r_{2,i}^\top=-1$ for any $i\neq i_0$. 
Therefore the distance between the first row and second row is $4n$. 
\end{proof}

Since the code $C$ attains the upper bound in Lemma~\ref{lem:tight}, $A$ is an orthogonal array OA$_1(16n^2,{4n+1},4n,2)$.  
\end{proof}

\subsection{Proof of Theorem~\ref{thm:mainq}}
\begin{proof}[The proof of (i) $\Rightarrow$ (ii)]
Assume that there exists a quaternary Hadamard matrix $H$ of order $2n$. 
Write $H$ as 
$$
H=\begin{bmatrix}
1 & r_1\\
1 & r_2\\
\vdots & \vdots\\
1 & r_{2n}\\
\end{bmatrix}, 
$$
where $r_i$ is a $1\times {(2n-1)}$ matrix for any $i$. 
\begin{lemma}\label{lem:qhm}
\begin{enumerate}
\item For any $i$, $r_ir_i^*={2n-1}$. 
\item For any distinct $i,j$, $r_i r_j^*=-1$. 
\end{enumerate}

\end{lemma}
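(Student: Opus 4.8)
The plan is to read off both identities directly from the defining relation $HH^*=2nI_{2n}$ of a quaternary Hadamard matrix, in exact parallel with Lemma~\ref{lem:hm}. Because $H$ is normalized so that its first column is the all-ones vector ${\bm 1}$ and that column is real, the first coordinate contributes $1\cdot\overline{1}=1$ to the inner product of row $i$ with the conjugate of row $j$. Thus the $(i,j)$-entry of $HH^*$ equals $1+r_ir_j^*$, where $r_i$ and $r_j$ are the $1\times(2n-1)$ truncated rows (with entries in $\{\pm1,\pm i\}$) and $r_ir_j^*$ is interpreted as a scalar, i.e. a $1\times1$ matrix.

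For (i), the diagonal entries of $HH^*$ give $1+r_ir_i^*=(HH^*)_{ii}=2n$, hence $r_ir_i^*=2n-1$. For (ii), the off-diagonal entries give $1+r_ir_j^*=(HH^*)_{ij}=0$ for $i\neq j$, hence $r_ir_j^*=-1$. That is all the argument requires.

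There is no genuine obstacle here: the lemma is the quaternary transcription of Lemma~\ref{lem:hm}, and the only points needing (minimal) care are that conjugation fixes the real leading coordinate and that $r_ir_j^*$ is to be read as a number rather than a matrix. Its role in the remainder of the proof of Theorem~\ref{thm:mainq} will mirror the real case: starting from an OA$_1(4n^2,2n+1,2n,2)$ decomposed as $\sum_{i=1}^{2n}iA_i$ with disjoint $(0,1)$-matrices $A_i$, one sets $D=\sum_{i=1}^{2n}A_i\otimes r_i$ and uses Lemma~\ref{lem:qhm} together with the quaternary analogue of Lemma~\ref{lem:oa} to evaluate $DD^*$ and conclude that $\begin{bmatrix}{\bm 1} & D\end{bmatrix}$ is a balancedly multi-splittable quaternary Hadamard matrix of order $4n^2$.
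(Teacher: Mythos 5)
Your proof is correct and is exactly the argument the paper intends (the paper states Lemma~\ref{lem:qhm} without writing out a proof, since it follows immediately from $HH^*=2nI_{2n}$ and the normalization of the first column). Reading the diagonal and off-diagonal entries of $HH^*$ as $1+r_ir_j^*$ is all that is needed, and your remarks about the real leading coordinate and the scalar interpretation of $r_ir_j^*$ are the right minimal points of care.
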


Assume that there exists an OA$(4n^2,{2n+1},2n,2)$, say $A$, of index $1$ 
 over $\{1,\ldots,2n\}$. Write $A=\sum_{i=1}^{2n} iA_i$, where the $A_i$'s are disjoint $4n^2\times {(2n+1)}$ $(0,1)$-matrices. 
We then define the $4n^2 \times {(4n^2-1)}$ matrix $D$ by $D=\sum_{i=1}^{2n} A_i\otimes r_i$ and $\tilde{D}=\begin{bmatrix}{\bm 1} &D \end{bmatrix}$.
\begin{lemma}
\begin{enumerate}
\item $DD^\top=4n^2 I_{4n^2}-J_{4n^2}$. 
\item $\tilde{D}$ is a quaternary Hadamard matrix of order $4n^2$. 
\end{enumerate}
\end{lemma}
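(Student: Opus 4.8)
The plan is to transcribe the proof of the corresponding lemma for order $16n^2$ almost verbatim, now with the parameter $q=2n$ and the quaternary relations recorded in Lemma~\ref{lem:qhm}. First note that since the $A_i$ are disjoint $(0,1)$-matrices with $\sum_{i=1}^{2n}A_i=J_{4n^2,2n+1}$, in each position exactly one $A_i$ has a $1$, so every entry of $D=\sum_{i=1}^{2n}A_i\otimes r_i$ lies in $\{\pm1,\pm i\}$; hence every entry of $\tilde D$ does too, and $D^*=\sum_{i=1}^{2n}A_i^\top\otimes r_i^*$ because the $A_i$ are real.

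For part (i) — understood with the Hermitian adjoint, consistent with the definition of a quaternary Hadamard matrix — I would expand
\[
DD^*=\sum_{i,j=1}^{2n}A_iA_j^\top\otimes r_ir_j^*
=\sum_{i=1}^{2n}A_iA_i^\top\otimes r_ir_i^*+\sum_{i\neq j}A_iA_j^\top\otimes r_ir_j^*.
\]
By Lemma~\ref{lem:qhm}, $r_ir_i^*=2n-1$ and $r_ir_j^*=-1$ for $i\neq j$, so $DD^*=(2n-1)\sum_{i=1}^{2n}A_iA_i^\top-\sum_{i\neq j}A_iA_j^\top$. Then apply Lemma~\ref{lem:oa}(i),(ii) with $q=2n$, namely $\sum_{i=1}^{2n}A_iA_i^\top=J_{4n^2}+2nI_{4n^2}$ and $\sum_{i\neq j}A_iA_j^\top=2n(J_{4n^2}-I_{4n^2})$; substituting and collecting the coefficients of $I_{4n^2}$ and $J_{4n^2}$ yields $DD^*=4n^2I_{4n^2}-J_{4n^2}$.

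Part (ii) is then immediate: since $\tilde D=\begin{bmatrix}{\bm 1}&D\end{bmatrix}$ and ${\bm 1}$ is real, $\tilde D\tilde D^*={\bm 1}{\bm 1}^\top+DD^*=J_{4n^2}+(4n^2I_{4n^2}-J_{4n^2})=4n^2I_{4n^2}$, and all entries of $\tilde D$ lie in $\{\pm1,\pm i\}$, so $\tilde D$ is a quaternary Hadamard matrix of order $4n^2$.

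I do not anticipate a genuine obstacle. The two points that need attention are: (a) Lemma~\ref{lem:oa} is stated for general $q$, so it does apply with $q=2n$ given the assumed OA$_1(4n^2,2n+1,2n,2)$; and (b) the bookkeeping of conjugation, since the relevant adjoint throughout is the Hermitian adjoint $D^*$ rather than $D^\top$ — the $r_i$ genuinely have non-real entries and the defining identity for a quaternary Hadamard matrix is $HH^*=nI$. Everything else is the same routine Kronecker-product computation as in the real case.
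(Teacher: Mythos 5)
Your proof is correct and follows essentially the same route as the paper: expand $DD^*$ as $\sum_{i,j}A_iA_j^\top\otimes r_ir_j^*$, substitute the values from Lemma~\ref{lem:qhm}, and apply Lemma~\ref{lem:oa} with $q=2n$. Your explicit remarks that the adjoint must be the Hermitian one (the paper writes $DD^\top$ but uses the values of $r_ir_j^*$) and that the entries of $\tilde D$ lie in $\{\pm1,\pm i\}$ are small clarifications the paper leaves implicit, not a different argument.
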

\begin{proof}
(i): By Lemma~\ref{lem:oa} and Lemma~\ref{lem:qhm},  
\begin{align*}
DD^\top&=\sum_{i,j=1}^{2n} A_i A_j^\top \otimes r_i r_j^\top\\\displaybreak[0]
&=\sum_{i=1}^{2n} A_i A_i^\top \otimes r_i r_i^\top+\sum_{i\neq j} A_i A_j^\top \otimes r_i r_j^\top\\\displaybreak[0]
&={(2n-1)}\sum_{i=1}^{2n} A_i A_i^\top -\sum_{i\neq j} A_i A_j^\top \\\displaybreak[0]
&={(2n-1)} J_{4n^2}+{(2n-1)}\cdot 2n I_{4n^2}- 2n(J_{4n^2}-I_{4n^2})\\\displaybreak[0]
&=4n^2 I_{4n^2}-J_{4n^2}. 
\end{align*}
(ii) immediately follows from (i). 
\end{proof}

Let $A'$ be a submatrix of $A$ obtained by restricting the columns to {an} $n$ element set. 
Write $A'=\sum_{i=1}^{2n} i A'_i$, where $A'_i$ ($i\in\{1,\ldots,2n\}$) are disjoint $4n^2 \times n$ $(0,1)$-matrices.
\begin{lemma}\label{lem:A'c}
There exists a symmetric $(0,1)$-matrix $B$ with diagonal entries $0$ such that 
\begin{enumerate}
\item $\sum_{i=1}^{2n} {A'}_i{A'}_i^\top=n J_{4n^2}-(nB+{(n-1)}(J_{4n^2}-I_{4n^2}-B))$, and 
\item $\sum_{i,j=1,i\neq j}^{2n}{A'}_i{A'}_j^\top=nB+{(n-1)}(J_{4n^2}-I_{4n^2}-B)$.
\end{enumerate}
\end{lemma}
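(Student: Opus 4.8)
The plan is to mirror the proof of Lemma~\ref{lem:A'} essentially verbatim, since the parameters $(4n^2,2n+1,2n,2)$ are exactly those covered by Lemma~\ref{lem:oa} with $q=2n$, and passing to an $n$-element subset of the $2n+1$ columns is precisely the situation of Lemma~\ref{lem:oa}(iii) with $s=n$. So the whole argument reduces to identifying the distance matrix of the restricted code and then a bookkeeping identity for $\sum_{i,j}A'_i(A'_j)^\top$.

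First I would apply Lemma~\ref{lem:dis} to the $4n^2\times n$ matrix $A'$ over $\{1,\ldots,2n\}$. This gives $\sum_{i=1}^{2n}A'_i(A'_i)^\top = n J_{4n^2}-D$, where $D$ is the Hamming distance matrix of the code $C'$ whose rows are the rows of $A'$. Next I would invoke Lemma~\ref{lem:oa}(iii) with $s=n$: since $C'$ is obtained from the OA code $C$ by restricting to an $n$-subset of the $2n+1$ coordinates, all pairwise Hamming distances in $C'$ are $n$ or $n-1$. Hence $D$ is symmetric with zero diagonal and off-diagonal entries in $\{n-1,n\}$. Defining $B$ to be the $(0,1)$-matrix recording which pairs of rows of $A'$ are at distance exactly $n$, we obtain a symmetric $(0,1)$-matrix $B$ with zero diagonal satisfying $D=nB+(n-1)(J_{4n^2}-I_{4n^2}-B)$; substituting into the previous identity yields (i).

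For (ii) I would use that each row of $A'$ contains exactly one occurrence of a symbol in each coordinate, so $\sum_{i=1}^{2n}A'_i=J_{4n^2,n}$, whence $\sum_{i,j=1}^{2n}A'_i(A'_j)^\top = J_{4n^2,n}J_{n,4n^2}=nJ_{4n^2}$. Subtracting the diagonal terms $\sum_{i=1}^{2n}A'_i(A'_i)^\top=nJ_{4n^2}-D$ computed in (i) gives $\sum_{i\neq j}A'_i(A'_j)^\top = nJ_{4n^2}-(nJ_{4n^2}-D)=D=nB+(n-1)(J_{4n^2}-I_{4n^2}-B)$, which is exactly (ii).

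All the manipulations are routine; there is no real obstacle. The only point deserving a line of care is checking that Lemma~\ref{lem:oa}(iii) genuinely applies here, i.e.\ that an OA$_1(4n^2,2n+1,2n,2)$ is a $1$-distance set at Hamming distance $2n$ in the Hamming scheme on $2n+1$ coordinates (so that restricting to $n\le 2n+1$ coordinates forces the two-distance behaviour with values $n$ and $n-1$), which is immediate from the same reasoning used for Lemma~\ref{lem:oa} with $q$ replaced by $2n$.
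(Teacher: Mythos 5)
Your proof is correct and follows essentially the same route as the paper: apply Lemma~\ref{lem:dis} to $A'$, identify the distance matrix of the restricted code as $nB+(n-1)(J_{4n^2}-I_{4n^2}-B)$ via the two-distance property from Lemma~\ref{lem:oa}(iii), and obtain (ii) from $\sum_{i=1}^{2n}A'_i=J_{4n^2,n}$. Your write-up is merely more explicit than the paper's about why the restricted code has distances only $n$ and $n-1$.
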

\begin{proof}
Since the distance matrix of the code of rows of $A'$ is $nB+{(n-1)}(J_{4n^2}-I_{4n^2}-B)$ with the desired property, the case (i) follows from Lemma~\ref{lem:dis}. 

Since $\sum_{i=1}^{2n} {A'}_i=J_{4n^2,n}$, we have $\sum_{i,j=1}^{2n} {A'}_i {A'}_j^\top=(\sum_{i=1}^{2n} {A'}_i)(\sum_{j=1}^{2n} {A'}_j^\top)=J_{4n^2,n}J_{n,4n^2}=n J_{4n^2}$. This with (i) shows  (ii). 
\end{proof}
Now we consider $D'=\sum_{i=1}^{2n} {A'}_i\otimes r_i$. 
Then, by Lemma~\ref{lem:A'c},  
\begin{align*}
D'{D'}^\top&=\sum_{i,j=1}^{2n} {A'}_i {A'}_j^\top \otimes r_i r_j^\top\\\displaybreak[0]
&=\sum_{i=1}^{2n} {A'}_i {A'}_i^\top \otimes r_i r_i^\top+\sum_{i\neq j} {A'}_i {A'}_j^\top \otimes r_i r_j^\top\\\displaybreak[0]
&={(2n-1)}\sum_{i=1}^{2n} {A'}_i {A'}_i^\top -\sum_{i\neq j} {A'}_i {A'}_j^\top \\\displaybreak[0]
&={(2n-1)} (n J_{4n^2}-(nB+{(n-1)}(J_{4n^2}-I_{4n^2}-B))- (nB+{(n-1)}(J_{4n^2}-I_{4n^2}-B))\\\displaybreak[0]
&= {(2n^2-n)} I_{4n^2}+n (J_{4n^2}-I_{4n^2}-2B). 
\end{align*}
Therefore the quaternary Hadamard matrix $D$ is balancedly multi-splittable.  
\end{proof}

\begin{proof}[The proof of (ii) $\Rightarrow$ (i)]

Assume that $H$ is a balancedly multi-splittable quaternary Hadamard matrix of order $4n^2$ with respect to the following block form: 
$$
H=\begin{bmatrix}
{\bm 1} & H_1 & \cdots & H_{{2n+1}}  
\end{bmatrix},
$$ 
where each $H_i$ is a $4n^2\times {(2n-1)}$ matrix. 

\begin{lemma}\label{lem:9-1}
For any $i$, $H_i H_i^*$ is a $({2n-1},-1)$-matrix.   
\end{lemma}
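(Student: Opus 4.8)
The plan is to mirror the argument of Lemma~\ref{lem:11-1} in the quaternary setting, tracking where the entries of $H_iH_i^*$ and of the ``splitting'' matrices are now complex rather than real. By symmetry it suffices to treat $i=1$. Since $H$ is a quaternary Hadamard matrix of order $4n^2$ with first column $\bm 1$, the identity $HH^*=4n^2I_{4n^2}$ reads
\begin{equation*}
J_{4n^2}+\sum_{i=1}^{2n+1}H_iH_i^*=4n^2I_{4n^2}.
\end{equation*}
Partition $\{1,\ldots,2n+1\}$ into the two $n$-element sets $\{2,\ldots,n+1\}$ and $\{n+2,\ldots,2n+1\}$. By balanced multi-splittability applied to each of these two blocks, there are positive reals $\alpha,\alpha'$ and $(0,\pm1,\pm i)$-matrices $S,S'$ with zero diagonal and nonzero off-diagonal entries such that
\begin{equation*}
\sum_{i=2}^{n+1}H_iH_i^*=\alpha I_{4n^2}+S',\qquad \sum_{i=n+2}^{2n+1}H_iH_i^*=\alpha'' I_{4n^2}+S'',
\end{equation*}
where taking the diagonal of $HH^*=4n^2I$ forces $\alpha=\alpha''$ and, since each $\tilde H_i:=[\bm 1\ H_i]$ has orthonormal-up-to-scalar columns, a short count gives $\alpha=2n^2-n$ (each $H_i$ contributes $2n-1$ to the diagonal, and there are $2n+1$ blocks summing with the leading $J$ to $4n^2I$, so each block of $n$ contributes $n(2n-1)$ on the diagonal). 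Hence
\begin{equation*}
H_1H_1^*=4n^2I_{4n^2}-J_{4n^2}-\bigl((4n^2-2n)I_{4n^2}+S'+S''\bigr)=2nI_{4n^2}-J_{4n^2}-(S'+S'').
\end{equation*}

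Now the key point: I must show the off-diagonal entries of $H_1H_1^*$ are all equal to $-1$, equivalently that $S'+S''=-2(J-I)$, i.e. every off-diagonal entry of $S'+S''$ equals $-2$. Here the real case used only that $S+S'$ is a $(0,\pm2)$-matrix and that $-4n-1$ is impossible because $|r_1r_j^*|\le 2n-1$; in the quaternary case each off-diagonal entry of $H_1H_1^*$ is a sum of $2n-1$ fourth-roots of unity, and off-diagonal entries of $S'+S''$ a priori lie in $\{0,\pm1,\pm2,\pm i,\pm 2i,\pm1\pm i\}$. I would argue as follows. Fix two distinct rows $u,v$; the corresponding entry of $S'$ is a unit or zero, likewise for $S''$. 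The $(u,v)$ entry of $H_1H_1^*$ equals $2n-\nu-\sigma$ where $\nu$ is the $(u,v)$ entry of $J$ (so $\nu=1$) and $\sigma$ the $(u,v)$ entry of $S'+S''$; this must be realizable as an inner product of two $\pm1,\pm i$ vectors of length $2n-1$, hence has absolute value at most $2n-1$. Writing $\sigma=s'+s''$ with $s',s''\in\{0,\pm1,\pm i\}$: I want to rule out everything except $\sigma=-2$.

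**The main obstacle** will be exactly this last elimination, since unlike the real case the ``obvious'' parity/size argument does not immediately close all cases; the extra leverage I expect to need is that $S'$ and $S''$ are themselves Gram-type matrices (each $\sum_{i=2}^{n+1}H_iH_i^*$ is positive semidefinite) together with the normalization $r_ir_i^*=2n-1$, $r_ir_j^*=-1$ from Lemma~\ref{lem:qhm}: concretely, for the sub-Hadamard structure, the same ``exactly one coordinate block where two rows agree'' phenomenon as in the real proof should force each entry of $S'$ (and of $S''$) to be a genuine scalar unit times a fixed pattern, and comparing with the requirement that $1+\sum_{i=1}^{2n+1}r_{1,i}r_{2,i}^*=0$ pins $\sigma=-2$. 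Once $H_1H_1^*$ is shown to be a $(2n-1,-1)$-matrix, the lemma follows; I would then flag that the companion argument that $[\bm 1\ H_i]$ contains $2n$ rows forming a quaternary Hadamard matrix of order $2n$ and that the induced code is equidistant proceeds verbatim as in the real case (Lemma~\ref{lem:tight} applies over the symbol set of size $2n$), though that is beyond the present statement.
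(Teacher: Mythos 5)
Your overall skeleton matches the paper's: split $\{2,\dots,2n+1\}$ into two $n$-element sets, apply balanced splittability to each half, and subtract from $HH^*=4n^2I_{4n^2}$. But there are three concrete problems. First, you have misread the conclusion: a $(2n-1,-1)$-matrix is one whose entries lie in $\{2n-1,-1\}$, not one whose off-diagonal entries all equal $-1$. The latter is in fact impossible: it would make $\tilde H_1=[{\bm 1}\ H_1]$ a $4n^2\times 2n$ matrix with $4n^2$ pairwise orthogonal nonzero rows, contradicting $\operatorname{rank}\tilde H_1\le 2n$; and the paper's next lemma depends on some off-diagonal entries being $2n-1$ (exactly one block per pair of rows). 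Second, you dropped the splitting scale. Balanced splittability gives $\sum_{i=2}^{n+1}H_iH_i^*=(2n^2-n)I+nS'$ with $S'$ a $(0,\pm1,\pm i)$-matrix (compare the forward direction, where $D'{D'}^\top=(2n^2-n)I+n(J-I-2B)$), so $H_1H_1^*=2nI-J-n(S'+S'')$ and an off-diagonal entry equals $-1-n\sigma$ with $\sigma=s'+s''$. With your unscaled version the entry is $-1-\sigma$ with $|\sigma|\le2$, which always satisfies the trivial bound $2n-1$, so no case can be eliminated and the argument cannot get off the ground. (Also, under either normalization, ``all off-diagonal entries $-1$'' corresponds to $\sigma\equiv0$, not to $S'+S''=-2(J-I)$ as you wrote.)

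Third, the elimination itself is left open. With the correct scale, $\sigma\in\{0,\pm2,\pm2i,\pm1\pm i\}$, and the bound coming from the fact that each entry of $H_1H_1^*$ is a sum of $2n-1$ fourth roots of unity (so its real and imaginary parts have absolute values summing to at most $2n-1$) kills $\sigma=2,\ \pm2i,\ 1\pm i$, leaving $\sigma\in\{0,-2\}$, which give the values $-1$ and $2n-1$ --- but on the face of it also $\sigma=-1\pm i$, which gives $(n-1)\mp ni$ of $\ell_1$-norm exactly $2n-1$. You are right that this mixed case is the delicate point (the paper itself simply asserts that $S+S'$ is a $(0,\pm2,\pm2i)$-matrix), but your proposed fix is circular: the ``exactly one coordinate block where two rows agree'' phenomenon is precisely the equidistance statement that the paper proves after, and from, this lemma, so it cannot be invoked here. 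As written, the proposal neither targets the correct statement nor closes the one genuinely nontrivial case.
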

\begin{proof}
We show the case $i=1$. 
Since $H$ is a quaternary Hadamard matrix of order $4n^2$, $HH^*=4n^2I_{4n^2}$, that is, 
$$
J_{4n^2}+\sum_{i=1}^{{2n+1}}H_i H_i^*=4n^2I_{4n^2}. 
$$ 
By the assumption of balanced multi-splittabllity, we have that the inner product of distinct rows of matrices $\begin{bmatrix} H_2 & \cdots & H_{{n+1}}  
\end{bmatrix}$ or $\begin{bmatrix}
 H_{{n+2}} & \cdots & H_{{2n+1}}  
\end{bmatrix}$ are $\pm 2n,\pm2i$. Thus, 
$$
\sum_{i=2}^{{n+1}}H_i H_i^*={(2n^2-n)}I_{4n^2}+nS,\quad \sum_{i={n+2}}^{{2n+1}}H_i H_i^*={(2n^2-n)}I_{4n^2}+nS', 
$$
where $S$ and $S'$ are $(0,\pm1,\pm i)$-matrix with diagonal entries $0$ and off-diagonal entries $\pm1,\pm i$. 
Then 
\begin{align*}
H_1H_1^*&=4n^2I_{4n^2}-J_{4n^2}-({(4n^2-2n)}I_{100}+nS+nS')\\
&=2nI_{4n^2}-J_{4n^2}-n(S+S').
\end{align*}
Since both $S$ and $S'$ are $(0,\pm1,\pm i)$-matrix, $S+S'$ is a $(0,\pm2,\pm2 i)$-matrix with diagonal entries $0$.  However, the absolute values of off-diagonal entries of $H_1H_1^*$ cannot exceed ${2n-1}$, $S+S'$ is $(0,-2)$-matrix. 
Therefore, $H_1H_1^*$ is a $({2n-1},-1)$-matrix. 
\end{proof}

For each $i$,  consider the matrix $\tilde{H}_i=\begin{bmatrix}
{\bm 1} & H_i  \end{bmatrix}$. 
Then, by Lemma~\ref{lem:9-1}, $\tilde{H}_i\tilde{H}_i^*$ is a $(2n,0)$-matrix. 
Since $\tilde{H}_i^* \tilde{H}_i=4n^2I_{2n}$, the rank of $\tilde{H}_i$ is $2n$. 

Therefore there exist $4n$ rows of $\tilde{H}_i$ that correspond to the rows of
a Hadamard matrix $\tilde{K}_i$ of order $4n$.

Therefore there exist $2n$ rows of $\tilde{H}_i$ that correspond to the rows of
a Hadamard matrix $\tilde{K}_i$ of order $2n$.



Write $\tilde{K}_i=\begin{bmatrix}
{\bm 1} & K_i  \end{bmatrix}$.  
Assign a symbol $j$ to any row in $H_i$, which equals the $j$-th row of $K_i$. Let $A$ be the resulting $4n^2\times {(2n+1)}$ matrix over the symbol set $\{1,\ldots,2n\}$.

\begin{lemma}
The code $C$ with codewords consisting of the rows of $A$ is an equidistance code with the number of codewords $4n^2$, equidistance $2n$, of length ${2n+1}$. 
\end{lemma}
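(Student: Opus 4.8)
The plan is to transcribe the corresponding step in the proof of Theorem~\ref{thm:main} into the Hermitian setting. By the block symmetry of the construction it suffices to look at the first two rows of $H$ and show that the corresponding two codewords of $C$ differ in exactly $2n$ of their $2n+1$ coordinates. Write these rows as $\begin{bmatrix} 1 & r_{1,1} & \cdots & r_{1,2n+1}\end{bmatrix}$ and $\begin{bmatrix} 1 & r_{2,1} & \cdots & r_{2,2n+1}\end{bmatrix}$, where $r_{1,i}$ and $r_{2,i}$ denote the first and second rows of $H_i$.

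The first step is to describe, for a fixed $i$, when the $i$-th coordinates of the two codewords agree. The $i$-th symbol of a row of $H$ is the index $j$ of the row of $K_i$ that its $H_i$-part equals, and this is well defined because every row of $\tilde H_i$ is equal to some row of $\tilde K_i$: the $2n$ rows of $\tilde K_i$ are pairwise orthogonal of squared norm $2n$, hence (being $2n$ linearly independent rows of $\tilde H_i$) an orthogonal basis of the $2n$-dimensional row space of $\tilde H_i$, so writing an arbitrary row $w$ of $\tilde H_i$ as $w=\sum_c\lambda_c\tilde k_c$ and pairing with $\tilde k_d$ shows $2n\lambda_d$ is an entry of the $(2n,0)$-matrix $\tilde H_i\tilde H_i^*$ (Lemma~\ref{lem:9-1}), so $\lambda_d\in\{0,1\}$, while $\|w\|^2=2n$ forces $\sum_c\lambda_c^2=1$, whence $w=\tilde k_c$ for a unique $c$. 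Consequently the two codewords agree in coordinate $i$ iff $r_{1,i}=r_{2,i}$, and by Lemma~\ref{lem:9-1} together with the Cauchy--Schwarz equality condition (if $r_{1,i}r_{2,i}^*=2n-1$ then $r_{1,i}=\mu r_{2,i}$ with $\mu(2n-1)=2n-1$, so $\mu=1$) this is equivalent to $r_{1,i}r_{2,i}^*=2n-1$; in the only other case, $r_{1,i}r_{2,i}^*=-1$, the coordinates differ.

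The second step is the counting identity, identical to the real case. From $HH^*=4n^2I_{4n^2}$ and the fact that the two rows are distinct, $1+\sum_{i=1}^{2n+1}r_{1,i}r_{2,i}^*=0$. If $k$ of the summands equal $2n-1$ and the other $2n+1-k$ equal $-1$, then $k(2n-1)-(2n+1-k)=-1$, i.e. $2n(k-1)=0$, so $k=1$. Hence the two codewords agree in exactly one coordinate and differ in the remaining $2n$; in particular they are distinct. Since this holds for every pair of rows of $H$, the assignment of a codeword to each row of $H$ is injective, so $|C|=4n^2$, every two codewords are at Hamming distance $2n$, and $C$ has length $2n+1$ by construction.

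The computation is routine; the only point that genuinely differs from the real case, and which I expect to need a line of care, is replacing ``the maximal inner product forces equal rows'' by its complex analogue. This is exactly what the Cauchy--Schwarz step above provides, and it is to make this (and the statement that $\tilde H_i\tilde H_i^*$ is a $(2n,0)$-matrix, rather than a matrix with off-diagonal entries of modulus $2n-1$) clean that one works with the augmented matrices $\tilde H_i=\begin{bmatrix}{\bm 1}&H_i\end{bmatrix}$ carrying a leading all-ones column.
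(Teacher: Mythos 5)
Your proof is correct and follows the same route as the paper's: reduce to the first two rows, use $1+\sum_{i=1}^{2n+1} r_{1,i}r_{2,i}^*=0$ together with $r_{1,i}r_{2,i}^*\in\{2n-1,-1\}$ (Lemma~\ref{lem:9-1}) to force exactly one summand equal to $2n-1$, hence distance $2n$. The additional details you supply --- that every row of $\tilde{H}_i$ coincides with a row of $\tilde{K}_i$, and that $r_{1,i}r_{2,i}^*=2n-1$ iff $r_{1,i}=r_{2,i}$ via the Cauchy--Schwarz equality case --- are steps the paper leaves implicit, and your justifications of them are sound.
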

\begin{proof}
It is enough to see the case for the first row and second row. 
Let the first and second rows of $H$ be the following forms:
\begin{align*}
\begin{bmatrix}
1 & r_{1,1} & \cdots & r_{1,{2n+1}}  
\end{bmatrix}, 
\\
\begin{bmatrix}
1 & r_{2,1} & \cdots & r_{2,{2n+1}}  
\end{bmatrix}. 
\end{align*}
Consider the inner product between them: 
$$
1+\sum_{i=1}^{{2n+1}}r_{1,i}r_{2,i}^*=0.
$$
By Lemma~\ref{lem:9-1}, $r_{1,i}r_{2,i}^*\in\{{2n-1},-1\}$ for any $i$. Then there exists $i_0$ such that 
$r_{1,i_0}r_{2,i_0}^*={2n-1}$ and $r_{1,i}r_{2,i}^*=-1$ for any $i\neq i_0$. 
Therefore the distance between the first row and second row is $2n$. 
\end{proof}

Since the code $C$ attains the upper bound in Lemma~\ref{lem:tight}, $A$ is an orthogonal array OA$_1(4n^2,{2n+1},2n,2)$.  
\end{proof}

\section{Example}
In this section, we present an example of balancedly multi-splittable Hadamard matrices following the construction in Theorem~\ref{thm:main}.

\begin{example}
Take an OA$_1(16,5,4,2)$ $A$ and a Hadamard matrix $H$ of order $4$ as: 
\begin{align*}
A^\top&=\sum_{i=1}^4 A_i^\top=\left[
\begin{array}{cccccccccccccccc}
 1 & 1 & 1 & 1 & 2 & 2 & 2 & 2 & 3 & 3 & 3 & 3 & 4 & 4 & 4 & 4 \\
 1 & 2 & 3 & 4 & 1 & 2 & 3 & 4 & 1 & 2 & 3 & 4 & 1 & 2 & 3 & 4 \\
 1 & 2 & 3 & 4 & 2 & 1 & 4 & 3 & 3 & 4 & 1 & 2 & 4 & 3 & 2 & 1 \\
 1 & 2 & 3 & 4 & 3 & 4 & 1 & 2 & 4 & 3 & 2 & 1 & 2 & 1 & 4 & 3 \\
 1 & 2 & 3 & 4 & 4 & 3 & 2 & 1 & 2 & 1 & 4 & 3 & 3 & 4 & 1 & 2 \\
\end{array}
\right],\\
H&=\begin{bmatrix}
1 & r_1\\
1 & r_2\\
1 & r_3\\
1 & r_4\\
\end{bmatrix}=\left[
\begin{array}{cccc}
 1 & 1 & 1 & 1 \\
 1 & -1 & 1 & -1 \\
 1 & 1 & -1 & -1 \\
 1 & -1 & -1 & 1 \\
\end{array}
\right].
\end{align*}

Then the matrix $D$ constructed in Theorem~\ref{thm:main} is a balancedly multi-splittable Hadamard amtrix of order $16$: 
\begin{align*}
D&=\sum_{i=1}^4 A_i\otimes r_i\\
&=\begin{bmatrix} {\bm 1} & H_1 & H_2 & H_3 & H_4 & H_5 \end{bmatrix} \\
&=\left[
\begin{array}{c|ccc|ccc|ccc|ccc|ccc}
 1 & 1 & 1 & 1 & 1 & 1 & 1 & 1 & 1 & 1 & 1 & 1 & 1 & 1 & 1 & 1 \\
 1 & 1 & 1 & 1 & -1 & 1 & -1 & -1 & 1 & -1 & -1 & 1 & -1 & -1 & 1 & -1 \\
 1 & 1 & 1 & 1 & 1 & -1 & -1 & 1 & -1 & -1 & 1 & -1 & -1 & 1 & -1 & -1 \\
 1 & 1 & 1 & 1 & -1 & -1 & 1 & -1 & -1 & 1 & -1 & -1 & 1 & -1 & -1 & 1 \\
 1 & -1 & 1 & -1 & 1 & 1 & 1 & -1 & 1 & -1 & 1 & -1 & -1 & -1 & -1 & 1 \\
 1 & -1 & 1 & -1 & -1 & 1 & -1 & 1 & 1 & 1 & -1 & -1 & 1 & 1 & -1 & -1 \\
 1 & -1 & 1 & -1 & 1 & -1 & -1 & -1 & -1 & 1 & 1 & 1 & 1 & -1 & 1 & -1 \\
 1 & -1 & 1 & -1 & -1 & -1 & 1 & 1 & -1 & -1 & -1 & 1 & -1 & 1 & 1 & 1 \\
 1 & 1 & -1 & -1 & 1 & 1 & 1 & 1 & -1 & -1 & -1 & -1 & 1 & -1 & 1 & -1 \\
 1 & 1 & -1 & -1 & -1 & 1 & -1 & -1 & -1 & 1 & 1 & -1 & -1 & 1 & 1 & 1 \\
 1 & 1 & -1 & -1 & 1 & -1 & -1 & 1 & 1 & 1 & -1 & 1 & -1 & -1 & -1 & 1 \\
 1 & 1 & -1 & -1 & -1 & -1 & 1 & -1 & 1 & -1 & 1 & 1 & 1 & 1 & -1 & -1 \\
 1 & -1 & -1 & 1 & 1 & 1 & 1 & -1 & -1 & 1 & -1 & 1 & -1 & 1 & -1 & -1 \\
 1 & -1 & -1 & 1 & -1 & 1 & -1 & 1 & -1 & -1 & 1 & 1 & 1 & -1 & -1 & 1 \\
 1 & -1 & -1 & 1 & 1 & -1 & -1 & -1 & 1 & -1 & -1 & -1 & 1 & 1 & 1 & 1 \\
 1 & -1 & -1 & 1 & -1 & -1 & 1 & 1 & 1 & 1 & 1 & -1 & -1 & -1 & 1 & -1 \\
\end{array}
\right]. 
\end{align*}
\end{example}
\begin{remark}
There exist no balancedly multi-splittable quaternary Hadamard matrices of orders 36 and 100.
\end{remark}

\section*{Acknowledgments.}
Useful conversations with Professor Tayfeh-Rezaie is appreciated. Hadi Kharaghani is supported by the Natural Sciences and
Engineering  Research Council of Canada (NSERC).  Sho Suda is supported by JSPS KAKENHI Grant Number 18K03395, 22K03410.

\end{document}